\documentclass[12pt]{article}
\usepackage{amssymb,amsmath,amsfonts,amsthm,amsxtra,setspace}

\newcommand{\op}{\operatorname}

\newcommand{\mc}{\mathcal}

\newcommand{\set}[1]{\{#1\}}
\newcommand{\mf}[1]{\mathfrak{#1}}
\newcommand{\abs}[1]{\left| #1 \right|}

\newtheorem{prob}{Problem}

\newtheorem{theorem}{Theorem}

\newenvironment{defn}{\begin{trivlist} \item[] {\bf Definition.}\it}{\hspace*{0pt}\end{trivlist}}

\newsavebox{\Prfref}

\newsavebox{\prfref}




\newtheoremstyle{ref}
{\topsep}	
{\topsep}	
{\it}
{}
{}
{}
{ }
{\thmname{{\bfseries#1}}\thmnumber{ \textbf{#2\thmnote{\rm #3}\textbf .}}}
\newcommand{\ignore}[1]{}
\theoremstyle{ref}
\newtheorem{lem}[theorem]{Lemma}
\newtheorem{cor}[theorem]{Corollary}
\newtheorem{thm}[theorem]{Theorem}

%


\tolerance=200

\begin{document}
\title{Some observations on the Baireness of $C_k(X)$ for a locally compact space $X$}
\author{Franklin D. Tall}

\footnotetext[1]{Research supported by NSERC grant A-7354.\vspace*{2pt}}
\date{\today}
\maketitle

\begin{abstract}
We prove some consistency results concerning the Moving Off Property for locally compact spaces, and thus the question of whether their function spaces are Baire.
\end{abstract}

\renewcommand{\thefootnote}{}
\footnote{\parbox[1.8em]{\linewidth}{AMS $2000$ MSC: Primary 03E35, 54A35, 54C35, 54D45, 54E52; secondary 03E55, 03E65, 54D20, 54G20}\vspace*{5pt}}

\renewcommand{\thefootnote}{}

\footnote
{\parbox[1.8em]{\linewidth}{Key words and phrases: Baire, Moving Off Property, $C_k(X)$, locally compact, perfectly normal, PFA$(S)[S]$.}}

\section{Introduction}

The Moving Off Property was introduced in \cite{Gruenhage} to characterize when $C_k(X)$ satisfies the Baire Category Theorem, for $q$-spaces $X$.  Here we shall only be concerned with locally compact spaces (which are $q$), and so won't define $q$.  We shall assume all spaces are Hausdorff.

\begin{defn}
A \emph{moving off collection} for a space $X$ is a collection $\mc{K}$ of non-empty compact sets such that for each compact $L$, there is a $K \in \mc{K}$ disjoint from $L$.  A space satisfies the Moving Off Property (MOP) if each moving off collection includes an infinite subcollection with a discrete open expansion.
\end{defn}

\begin{thm}[{~\cite{Gruenhage}}]
A locally compact space $X$ satisfies the MOP if and only if $C_k(X)$ is Baire, i.e., satisfies the Baire Category Theorem.
\end{thm}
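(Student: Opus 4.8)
The plan is to characterize the Baire property of $C_k(X)$ through the Banach--Mazur game and to match winning strategies in that game against the combinatorics of moving off collections. Recall that a basic open set of $C_k(X)$ has the form $[f;K;\varepsilon]=\{g\in C(X): |g(x)-f(x)|<\varepsilon \text{ for all } x\in K\}$ with $K\subseteq X$ compact and $\varepsilon>0$, and that by Oxtoby's theorem a space is Baire iff the first (``empty'') player has no winning strategy in the Banach--Mazur game, where the two players alternately shrink a chain of nonempty open sets and the second (``nonempty'') player wins exactly when the final intersection is nonempty. Restricting all moves to basic open sets, a play produces functions $f_n$, an increasing sequence of compacta $K_n$ (replace $K_n$ by $K_0\cup\dots\cup K_n$), and tolerances $\varepsilon_n\downarrow 0$; the $f_n$ then converge uniformly on each $K_m$ to a function continuous on $Y=\bigcup_n K_n$, and the intersection is nonempty precisely when this limit extends to a continuous function on $X$ lying in every $[f_n;K_n;\varepsilon_n]$.

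The technical heart I would isolate first is a \emph{patching lemma}: for a pairwise disjoint family $\{K_n\}$ of compacta, $\{K_n\}$ admits a discrete open expansion if and only if there is a continuous $g\colon X\to\mathbb{R}$ with $g\restriction K_n$ equal to (sufficiently spread-out) constants, say $g\restriction K_n\equiv n$. For the forward direction I would build $g$ as a sum of bump functions, one supported in each expanding open set $U_n\supseteq K_n$; local compactness makes $X$ Tychonoff and supplies the bumps (Tietze/Urysohn), and discreteness of $\{U_n\}$ makes the sum continuous since near any point only one term is nonzero. For the converse, given such a $g$ the sets $g^{-1}\big((n-\tfrac12,n+\tfrac12)\big)$ form a discrete open expansion, because an interval of length less than $1$ about any value $g(x)$ meets at most one of the target intervals.

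For the direction MOP $\Rightarrow$ Baire I would show the empty player has no winning strategy: given a strategy $\sigma$, I steer the nonempty player's moves to build a play consistent with $\sigma$ whose intersection is nonempty. If the constraint compacta of the play remain inside a fixed compact set, the uniform limit extends continuously by Tietze and we are done. Otherwise the compacta (or their successive increments) escape every compact set and hence, after thinning, form a moving off collection; the MOP then yields an infinite subfamily with a discrete open expansion, and the patching lemma lets me assemble a single global continuous function agreeing with each $f_n$ on $K_n$ within $\varepsilon_n$, witnessing nonempty intersection. Conversely, for $\neg$MOP $\Rightarrow \neg$Baire I would fix a moving off collection $\mc{K}$ with no infinite subcollection having a discrete open expansion and define dense open sets $D_n$ consisting of functions that exceed $n$ on some member of $\mc{K}$ disjoint from a growing compact ``core''; density comes from the moving off property together with adding a bump supported off the core, and openness because the defining condition is checked on a single compact set. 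Any $f\in\bigcap_n D_n$ would exceed larger and larger values on an escaping, pairwise disjoint subfamily of $\mc{K}$; after thinning so that the attained values are spread out, the converse half of the patching lemma produces a discrete open expansion of that subfamily, contradicting the choice of $\mc{K}$. Hence $\bigcap_n D_n=\emptyset$ and $C_k(X)$ is not Baire.

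I expect the main obstacle to be the interface between the game and the patching lemma. In the $\neg$MOP direction the delicate bookkeeping is to define the $D_n$ so that density genuinely holds at every stage while a function in the intersection is forced to realize one fixed infinite disjoint subfamily with suitably separated values; in the MOP direction the subtle point is verifying that the constraint compacta arising from an arbitrary opposing strategy really do, after thinning, constitute a moving off collection to which the hypothesis applies, and that the patched global function respects all the tolerances $\varepsilon_n$ simultaneously on the nested compacta. Local compactness is used throughout precisely to guarantee the bump and extension functions that the patching lemma needs.
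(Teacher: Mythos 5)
First, a point of context: the paper does not actually prove this theorem --- it is quoted from Gruenhage--Ma \cite{Gruenhage} --- so your proposal has to be measured against the known proof. Your overall framework (the Oxtoby--Krom game characterization of Baireness, and the ``patching'' correspondence between discrete open expansions of disjoint compacta and continuous functions taking widely separated constant values on them) is indeed the framework of that proof, and your $\neg$MOP $\Rightarrow$ $\neg$Baire half is essentially correct: the dense open sets $D_n=\{f: f>n \text{ on some } K\in\mathcal{K}\}$ work (the ``growing compact core'' is unnecessary, since any $f\in\bigcap_n D_n$ is bounded on each compact set, so the witnessing sets escape automatically), and extracting a subfamily on which the ranges of $f$ are separated yields the discrete expansion via preimages of intervals, exactly as you say (only your interval arithmetic needs tightening: an interval of length just under $1$ can still meet two of the targets $(n-\tfrac12,n+\tfrac12)$; take the expansions and test neighborhoods shorter than the gaps between the target intervals).

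The genuine gap is in the direction MOP $\Rightarrow$ Baire. Your Case 1 claim --- ``if the constraint compacta stay inside a fixed compact set, the uniform limit extends continuously by Tietze'' --- is false: the limit is continuous only on $Y=\bigcup_n K_n$, which need not be closed, and Tietze requires a closed domain. Concretely, in $C_k(\mathbb{R})$ take $K_n=[1/n,1]$, $f_n(x)=\sin(1/\max(x,1/n))$, $\varepsilon_n=4^{-n}$: this is a legal nested play, all compacta lie in $[0,1]$, yet the intersection is empty, since any function in it would have to oscillate like $\sin(1/x)$ near $0$ while being continuous there. So ``bounded compacta'' is not a winning case per se; Nonempty must actively prevent such creeping, and how to do that is precisely the difficulty. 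This points to the structural flaw: your recipe for Nonempty (canonical moves, then post hoc analysis of the single resulting play --- bounded $\Rightarrow$ Tietze, escaping $\Rightarrow$ thin to a moving off family, apply MOP, patch) never consults the strategy $\sigma$ beyond the positions actually reached, so if it worked it would constitute a \emph{winning strategy} for Nonempty, i.e.\ it would prove that MOP implies $C_k(X)$ is weakly $\alpha$-favorable, hence by Lemma \ref{lemAlphaFavour} that $X$ is paracompact. The paper's own examples (ladder system spaces on stationary sets, Souslin trees) show this implication is consistently false: for such $X$, $C_k(X)$ is Baire but Nonempty has no winning strategy, so \emph{every} $\sigma$-blind scheme for Nonempty is defeated by some Empty strategy, and your case analysis must break down against it (as it does in Case 1, and in Case 2, where the nested compacta of a single play all contain $K_0$ and only their increments could move off --- but Empty, not Nonempty, controls those increments). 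The actual Gruenhage--Ma argument avoids this by applying the MOP not to the compacta of one play but to a collection assembled from $\sigma$'s responses to a whole tree of hypothetical continuations, and that $\sigma$-dependence is exactly what your sketch is missing and cannot do without.
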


There is a less onerous equivalent of the MOP for locally compact spaces:

\begin{lem}[{~\cite{G}}]
Let $X$ be a locally compact space.  Then $X$ has the MOP if and only if every moving off collection for $X$ includes an infinite discrete subcollection.
\end{lem}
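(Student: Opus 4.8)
The plan is to prove both implications, the forward one being essentially formal and the reverse one carrying all the content.

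For the easy direction, suppose $X$ has the MOP and let $\mc{K}$ be a moving off collection. By the MOP there is an infinite $\set{K_n : n\in\w}\subseteq\mc{K}$ admitting a discrete open expansion $\set{U_n}$ with $K_n\subseteq U_n$. Since each $K_n$ sits inside $U_n$ and a subfamily of a discrete family is discrete, $\set{K_n}$ is itself an infinite discrete subcollection, as required. This direction uses neither local compactness nor Hausdorffness.

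The real work is the converse: assuming every moving off collection has an infinite discrete subcollection, I must produce, for an arbitrary moving off $\mc{K}$, an infinite subcollection with a discrete open expansion. First I would apply the hypothesis to $\mc{K}$ to obtain an infinite discrete $\set{K_n:n\in\w}\subseteq\mc{K}$. Note this family is automatically moving off: a discrete family is locally finite, so each compact $L$ meets only finitely many $K_n$, and infinitely many are therefore disjoint from $L$. The temptation is to fatten each $K_n$ to an open $V_n$ with compact closure and declare victory; but discreteness of $\set{K_n}$ need not survive fattening, since the enlarged sets can accumulate at points outside $\bigcup_n K_n$, so $\set{V_n}$ may fail to be locally finite. (Indeed, for suitable $\Psi$-like spaces no discrete open expansion of such a family exists, which is exactly why the hypothesis must be invoked a second time.)

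The device I would use is an auxiliary collection engineered to be moving off. Using local compactness, let $\mc{H}$ consist of all sets $\ov{V}$ such that $V$ is open, $\ov{V}$ is compact, and $K_n\subseteq V$ for some $n$. I claim $\mc{H}$ is moving off: given compact $L$, pick (by the previous paragraph) an $n$ with $K_n\cap L=\emptyset$; as $K_n$ and $L$ are disjoint compacta in a locally compact Hausdorff space, I can separate them and shrink to obtain an open $V\supseteq K_n$ with $\ov{V}$ compact and $\ov{V}\cap L=\emptyset$, so $\ov{V}\in\mc{H}$ witnesses the moving off property. Now apply the hypothesis to $\mc{H}$ to extract an infinite discrete subcollection $\set{\ov{V_k}:k\in\w}$, and for each $k$ choose $n_k$ with $K_{n_k}\subseteq V_k$. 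Since $\set{\ov{V_k}}$ is discrete and $V_k\subseteq\ov{V_k}$, the open family $\set{V_k}$ is discrete as well; and since discrete families are pairwise disjoint while each $K_{n_k}$ is nonempty, no index can be used twice, so the $K_{n_k}$ are distinct. Thus $\set{V_k}$ is a discrete open expansion of the infinite subcollection $\set{K_{n_k}}\subseteq\mc{K}$, completing the proof.

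The main obstacle is precisely the construction of $\mc{H}$ together with the verification that it is moving off: local compactness (to obtain neighborhoods with compact closure) and Hausdorffness (to separate two disjoint compacta) are exactly what turn the neighborhood-closures into a legitimate moving off collection, so that the hypothesis can be applied a second time to supply the local finiteness that a direct expansion of $\set{K_n}$ fails to provide.
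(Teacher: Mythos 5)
Your proof is correct and follows essentially the same route as the paper: fatten the compact sets to open sets with compact closure, verify that the resulting collection of closures is moving off (via local compactness and Hausdorff separation), apply the hypothesis to this auxiliary collection, and pull the open sets back as the discrete open expansion. The only difference is your preliminary application of the hypothesis to $\mc{K}$ itself to extract a discrete subfamily before fattening --- a harmless but superfluous step, since the paper builds the auxiliary collection $\mc{K}'$ directly from all of $\mc{K}$ and invokes the hypothesis just once.
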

We give a proof for the benefit of readers who are not topologists.

\begin{proof}
Let $\mc{K}$ be a moving off collection for $X$.  By local compactness, each $K \in \mc{K}$ can be fattened to an open set with compact closure.  Let $\mc{K}'$ be the collection of all compact closures of open sets around members of $\mc{K}$.  Then $\mc{K}'$ is moving off.  For let $C$ be a compact subset of $X$.  There is a $K \in \mc{K}$ disjoint from $C$.  By regularity and local compactness, there is an open $U \supseteq K$ with compact closure $\overline{U}$ disjoint from $C$.  Then $\overline{U} \in \mc{K}'$.  Since we have established that $\mc{K}'$ is moving off, by hypothesis it includes an infinite discrete collection $\{\overline{U}_n\}_{n < \omega}$.  But each $U_n$ included some $K_n \in \mc{K}$.  Then $\{K_n\}_{n < \omega}$ is discrete and has the discrete open expansion $\{U_n\}_{n < \omega}$.
\end{proof}

In \cite{LT1}, \cite{LT2}, and \cite{Tallb}, assuming the existence of a supercompact cardinal, a model of set theory is constructed, which we shall refer to as a \emph{model of PFA$(S)[S]$}.  We refer the reader to those papers for a discussion of what PFA$(S)[S]$ is.  In these papers various propositions concerning locally compact normal spaces are established in this model.  We shall use:

\begin{lem}[{~\cite{LT2}}]\label{lem1}
In this model, locally compact hereditarily normal spaces which do not include a perfect pre-image of $\omega_1$ are paracompact.
\end{lem}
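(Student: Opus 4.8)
The plan is to prove the contrapositive: a locally compact, hereditarily normal space $X$ that is not paracompact must include a perfect pre-image of $\omega_1$. I would begin from the classical characterization (Dieudonn\'e--Morita) that a locally compact space is paracompact if and only if it is a free topological sum of clopen $\sigma$-compact subspaces. Let $G$ be the union of all clopen $\sigma$-compact subsets of $X$; then $G$ is open and is itself such a free sum, hence paracompact, and $Y = X \setminus G$ is a closed subspace on which the failure is concentrated, in the sense that no nonempty relatively clopen piece of $Y$ is $\sigma$-compact. Since $X$ is paracompact exactly when $Y = \emptyset$, I may replace $X$ by $Y$ and assume $X$ is nonempty, locally compact, hereditarily normal, and has this concentration property, and it then suffices to exhibit a perfect pre-image of $\omega_1$ inside it.

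Assuming the failure is of ``$\omega_1$-chain type,'' I would build by transfinite recursion a continuous increasing chain $\langle U_\alpha : \alpha < \omega_1 \rangle$ of open sets with compact closures satisfying $\overline{U_\alpha} \subseteq U_{\alpha+1}$ and $U_\lambda = \bigcup_{\beta<\lambda} U_\beta$ at limits. At successors, local compactness lets me fatten $\overline{U_\alpha}$ together with one further point of a fixed well-ordered enumeration; at a countable limit $\lambda$ the set $U_\lambda$ is $\sigma$-compact but, by concentration, not clopen, so its boundary supplies new points and the recursion continues. Because the space is not $\sigma$-compact the chain cannot terminate below $\omega_1$. The levels $L_\alpha = \overline{U_\alpha} \setminus \bigcup_{\beta<\alpha} U_\beta$ are compact and partition the closed subspace $Z = \bigcup_{\alpha<\omega_1} \overline{U_\alpha}$, and the map $f : Z \to \omega_1$ sending $L_\alpha$ to $\alpha$ has compact fibres with $f^{-1}([0,\alpha]) = \overline{U_\alpha}$ compact. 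Verifying that $f$ is continuous and closed, hence perfect, onto $\omega_1$ is the technical core of this step.

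The genuine difficulty, and where hereditary normality and the hypotheses packaged into PFA$(S)[S]$ are indispensable, is justifying that the failure of paracompactness really is of this $\omega_1$-chain type, rather than dispersing into a wilder obstruction such as a $\Psi$-space-like almost disjoint family or an Ostaszewski/thin-tall configuration that fails paracompactness with no perfect pre-image of $\omega_1$ present. In ZFC such pathologies genuinely exist, so the model must do the work of excluding them. The plan is to invoke the consequences of PFA$(S)[S]$ established earlier in this series: that locally compact hereditarily normal spaces in this model are collectionwise Hausdorff (indeed $\omega_1$-collectionwise normal), which kills the separated-discrete obstructions, together with the reflection principle available there asserting that non-paracompactness of such a space reflects to a subspace of size $\aleph_1$. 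Hereditary normality is used both to separate the successive boundaries $\overline{U_\alpha} \setminus U_\alpha$ and to pass the relevant properties to the subspaces $Y$ and $Z$. The hard part is precisely this reflection-and-rigidity step: once one knows the obstruction lives on a space of size $\aleph_1$ and that every discrete subfamily admits a discrete open expansion, a Balogh-style elementary-submodel argument organizes it into the continuous $\omega_1$-chain of the previous paragraph, delivering the perfect pre-image of $\omega_1$.
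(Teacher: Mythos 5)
First, a point of order: the paper does not prove this lemma at all --- it is quoted from \cite{LT2}, where its proof is a long and technically heavy argument --- so your proposal has to stand on its own, and it does not. Your very first reduction is false: the union $G$ of all clopen $\sigma$-compact subsets of a locally compact space need not be paracompact, and paracompactness of $X$ is not equivalent to $X \setminus G = \emptyset$. The space $\omega_1$ itself is a counterexample: every initial segment $[0,\alpha]$ is clopen and compact, so $G = \omega_1$ and $Y = \emptyset$, yet $\omega_1$ is not paracompact. The Dieudonn\'e--Morita characterization requires a \emph{pairwise disjoint} family of clopen $\sigma$-compact pieces, and producing disjointness is exactly where paracompactness gets used; you cannot manufacture it by unioning all candidates. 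Second, your chain construction silently requires $\overline{U_\lambda}$ to be compact at countable limit stages (otherwise $f^{-1}([0,\lambda])$ is not compact and $f$ is not perfect), and nothing in local compactness or hereditary normality supplies this: the closure of a $\sigma$-compact open set can fail to be compact, or even Lindel\"of. Your phrase ``assuming the failure is of $\omega_1$-chain type'' concedes the point, but that assumption is essentially the conclusion of the lemma, so the construction begs the question.

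Third, the place where you locate ``the genuine difficulty'' is indeed where the theorem lives, but your plan for it is a placeholder, not a proof. The ``reflection principle \ldots\ asserting that non-paracompactness of such a space reflects to a subspace of size $\aleph_1$'' is not an available black box in this series: reflection of non-paracompactness is precisely the sort of statement \cite{LT2} and \cite{Tallb} must work hard to approximate (note that Lemma \ref{lem3} of the present paper takes Lindel\"of number $\leq \aleph_1$ as a \emph{hypothesis} on the whole space; it does not derive it by reflection), and the collectionwise Hausdorff results in the model are established for normal first countable spaces, not handed to you for locally compact hereditarily normal ones. Saying that a ``Balogh-style elementary-submodel argument organizes it into the continuous $\omega_1$-chain'' describes the flavor of the proof in \cite{LT2}; it does not constitute one. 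In sum: a false reduction, a question-begging construction, and an appeal to the theorem itself in place of its hardest step.
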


\begin{cor}[{~\cite{LT1}}]\label{cor2}
In this model, locally compact, perfectly normal spaces are paracompact.
\end{cor}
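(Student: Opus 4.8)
The plan is to deduce the corollary directly from Lemma~\ref{lem1} by verifying its two hypotheses for a locally compact, perfectly normal space $X$: that $X$ is hereditarily normal, and that $X$ includes no perfect pre-image of $\omega_1$. The first is immediate, since perfect normality is hereditary and implies normality, so every subspace of $X$ is normal and hence $X$ is locally compact and hereditarily normal. The entire burden therefore falls on ruling out perfect pre-images of $\omega_1$, after which Lemma~\ref{lem1} yields paracompactness with no further work.

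To rule out a perfect pre-image of $\omega_1$, I would argue by contradiction, exploiting that perfect normality is inherited by subspaces and pushed forward along perfect maps. Suppose some subspace $Y \subseteq X$ admits a perfect map $f \colon Y \to \omega_1$. Being a subspace of the perfectly normal $X$, $Y$ is itself perfectly normal. I would then show that a continuous closed surjective image of a perfectly normal space is perfectly normal (a perfect map is in particular such a surjection): normality of $\omega_1 = f(Y)$ follows because closed continuous images of normal spaces are normal, and for the $G_\delta$ condition, given a closed $C \subseteq \omega_1$ one writes the closed set $f^{-1}(C)$ as $\bigcap_n U_n$ with each $U_n$ open in $Y$, and checks that $V_n = \{z : f^{-1}(z) \subseteq U_n\}$ is open (its complement is $f(Y \setminus U_n)$, which is closed since $f$ is closed) and that $C = \bigcap_n V_n$. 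Hence $\omega_1$ would be perfectly normal.

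It remains to contradict this by recalling that $\omega_1$ with the order topology is not perfectly normal. My preferred witness is the closed, unbounded set $L$ of countable limit ordinals. In a perfectly normal space every closed set is a zero-set, so if $\omega_1$ were perfectly normal there would be a continuous $g \colon \omega_1 \to \mathbb{R}$ with $L = g^{-1}(0)$. But every continuous real-valued function on $\omega_1$ is eventually constant (the standard pressing-down argument), so $g$ is constant on some tail $[\alpha_0, \omega_1)$; since that tail meets both $L$ and its complement, $g$ cannot vanish precisely on $L$, a contradiction. Thus $X$ includes no perfect pre-image of $\omega_1$, and Lemma~\ref{lem1} applies. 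The only genuinely substantive point is the preservation of perfect normality under perfect maps; the hereditary normality of $X$ and the non-perfect-normality of $\omega_1$ are standard facts that could simply be quoted.
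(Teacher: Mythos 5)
Your proposal is correct and is exactly the derivation the paper intends: Corollary~\ref{cor2} is presented as a consequence of Lemma~\ref{lem1} (the proof itself is left to \cite{LT1}), and the route is precisely yours---perfect normality is hereditary, so the space is hereditarily normal, and it contains no perfect pre-image of $\omega_1$ because a perfect (hence closed) map would push perfect normality onto $\omega_1$, which is not perfectly normal. Your filled-in details (the $G_\delta$ preservation under closed maps and the eventual-constancy argument showing the limit ordinals are not a zero-set) are sound.
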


\begin{lem}[{~\cite{Tallb}}]\label{lem3}
In this model, locally compact normal spaces with Lindel\"of number $\leq \aleph_1$ which do not include a perfect pre-image of $\omega_1$ are paracompact.
\end{lem}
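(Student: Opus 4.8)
The plan is to establish a structural dichotomy: assuming $X$ is locally compact, normal, with $L(X) \le \aleph_1$, I will show that either $X$ is a free topological sum of $\sigma$-compact clopen subspaces---hence paracompact, by the classical characterization of paracompactness for locally compact spaces---or $X$ contains a closed copy of a perfect pre-image of $\omega_1$. Since the latter is excluded by hypothesis, paracompactness follows. The whole point is that in this model the Lindel\"of bound forces these two horns to be exhaustive.

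First I would set up a transfinite exhaustion. By local compactness every point lies in a $\sigma$-compact open set, and $L(X) \le \aleph_1$ lets me write $X = \bigcup_{\xi<\omega_1} G_\xi$ with each $G_\xi$ open and $\sigma$-compact. Fixing a base point, I build an increasing continuous chain $\langle U_\alpha : \alpha < \omega_1 \rangle$ of $\sigma$-compact open sets by iterating a ``closure-swallowing'' step: given the $\sigma$-compact open $U_\alpha$, its closure $\overline{U_\alpha}$ is Lindel\"of, hence covered by countably many of the $G_\xi$, and I set $U_{\alpha+1}$ to be a $\sigma$-compact open set containing $\overline{U_\alpha}$, taking unions at limits. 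Two cases arise. If at some stage $\lambda$ the closure is swallowed, $\overline{U_\lambda} = U_\lambda$, then $U_\lambda$ is clopen and $\sigma$-compact; peeling it off and recursing on its complement (again locally compact, normal, with Lindel\"of number $\le \aleph_1$) produces the desired free-sum decomposition. If the process never stabilizes, I am left with a strictly increasing chain cofinal in a clopen subspace $Y = \bigcup_{\alpha<\omega_1} U_\alpha$ on which I must locate the obstruction.

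In the non-stabilizing case the level function $f(y) = \min\{\alpha : y \in \overline{U_\alpha}\}$ maps $Y$ onto $\omega_1$, with $f^{-1}([0,\alpha]) = \overline{U_\alpha}$ closed. The work is to pass to a closed subspace on which $f$ is genuinely \emph{perfect}, i.e., continuous with compact fibres and closed. Here normality enters: I use it to separate the ``annular'' increments $\overline{U_{\alpha+1}} \setminus U_\alpha$ and to shrink $Y$ to a closed $D \subseteq Y$ meeting each level in a compact set, so that $f \restriction D$ has compact fibres; closedness of $f \restriction D$ then follows from continuity of the chain at limit stages. This exhibits a perfect pre-image of $\omega_1$ closed in $X$, contradicting the hypothesis, so the stabilizing case must in fact always occur.

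The role of the model, and the main obstacle, lie in guaranteeing that the two cases are genuinely exhaustive---that a merely normal (not hereditarily normal) $X$ cannot evade both horns. The danger is a ``stationary'' pathology in which the chain fails to stabilize yet no honest perfect pre-image can be extracted, precisely the kind of obstruction that can survive in ZFC. This is where PFA$(S)[S]$ is used, via the reflection and collectionwise-normality phenomena available for locally compact normal spaces in this model (the same circle of ideas underlying Lemma~\ref{lem1}): one boosts normality to enough collectionwise normality on the $\sigma$-compact stages $U_\alpha$ to carry out the separations of the previous paragraph uniformly along a club, and then reflects the failure of stabilization down to an honest perfect map. The contrast with Lemma~\ref{lem1} is instructive---there hereditary normality supplies separation on arbitrary subspaces, whereas here the bound $L(X) \le \aleph_1$ confines all the separation we need to the Lindel\"of stages $U_\alpha$, where plain normality (boosted by the model) suffices. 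Verifying that this confinement is legitimate, so that nothing escapes the $\omega_1$-length chain, is the crux of the argument.
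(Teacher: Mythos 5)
Your proposal has two genuine gaps, and they sit exactly where all the content of this lemma lives; note also that the paper itself gives no proof of Lemma~\ref{lem3} (it is quoted from \cite{Tallb}, where the proof is a substantial argument), so your sketch has to stand entirely on its own. The first gap is the assertion, inside the ``closure-swallowing'' recursion, that the closure of each $\sigma$-compact open stage $U_\alpha$ is Lindel\"of. That is not a ZFC fact for spaces satisfying the hypotheses of this lemma: under MA$_{\omega_1}$, the Cantor tree on a set of reals of size $\aleph_1$ (the paper's example a)) is locally compact, normal, of size $\aleph_1$ (hence of Lindel\"of number $\leq \aleph_1$), and is a Moore space, so it includes no perfect pre-image of $\omega_1$ (perfect pre-images of $\omega_1$ are countably compact and non-compact, while countably compact Moore spaces are compact); yet it is not paracompact, and the closure of the $\sigma$-compact dense open set $2^{<\omega}$ is the whole non-Lindel\"of space. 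So your chain construction, presented as ZFC-level bookkeeping, already breaks at the very first step --- not at some limit-stage ``stationary pathology'' --- and the model would have to be invoked to legitimize it, which you do not do.

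The second gap is the claim that non-stabilization yields a perfect pre-image of $\omega_1$. Ostaszewski's space under $\diamondsuit$ refutes this as a ZFC implication: it is locally compact, perfectly normal, of size $\aleph_1$, and countable sets have compact closures, so there your chain goes through perfectly well; it is not a free sum of $\sigma$-compact clopen pieces (countable compactness forbids any such decomposition), so stabilization fails; yet it includes no perfect pre-image of $\omega_1$, being hereditarily separable (a subspace mapping perfectly onto $\omega_1$ would be separable, forcing $\omega_1$ to be separable). In other words, the ``structural dichotomy'' you set out to prove is, for these spaces, exactly the statement of the lemma, and it is consistently false in ZFC; all of its truth in the model must come from the paragraph where you ``boost normality to enough collectionwise normality'' and ``reflect the failure of stabilization down to an honest perfect map.'' That paragraph is not an argument: it names no specific consequence of PFA$(S)[S]$ (such as the fact, used elsewhere in this circle of results, that in this model locally compact normal spaces are $\aleph_1$-collectionwise Hausdorff, or the P-ideal dichotomy), and it gives no mechanism for extracting the closed, countably compact, non-compact subspace that a perfect pre-image of $\omega_1$ must be. As written, the proposal restates the theorem as a strategy and defers both hard steps to an unspecified appeal to the model.
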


Let us also quote several useful results concerning the MOP.

\begin{lem}[{~\cite{MN, Gruenhage}}]\label{lem4}
Countably compact spaces satisfying the MOP are compact.
\end{lem}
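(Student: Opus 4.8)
The plan is to prove the contrapositive: I would assume that $X$ is countably compact but \emph{not} compact, and then produce a moving off collection with no infinite subcollection admitting a discrete open expansion, thereby contradicting the MOP. The single observation driving the argument is that a countably compact space can contain no infinite discrete family of nonempty open sets: from such a family $\{U_n\}_{n<\omega}$ one selects a point $x_n \in U_n$ for each $n$, and discreteness of the $U_n$ forces $\{x_n : n < \omega\}$ to be an infinite closed discrete subset of $X$, which no countably compact space possesses.

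First I would check that the collection $\mc{K} = \{\,\{x\} : x \in X\,\}$ of all singletons is a moving off collection. Each $\{x\}$ is a nonempty compact set, and given any compact $L \subseteq X$, the assumption that $X$ is not compact guarantees $L \neq X$, so there is a point $x \in X \setminus L$; then $\{x\} \in \mc{K}$ is disjoint from $L$. Hence $\mc{K}$ is moving off.

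Next I would invoke the MOP applied to $\mc{K}$: it must contain an infinite subcollection $\{\{x_n\} : n < \omega\}$ (with the $x_n$ distinct) together with open sets $U_n \supseteq \{x_n\}$ such that $\{U_n\}_{n<\omega}$ is discrete. But now the observation from the first paragraph applies verbatim: since $x_n \in U_n$ and the family $\{U_n\}$ is discrete, every point of $X$ has a neighborhood meeting at most one $U_n$, hence containing at most one $x_n$, so $\{x_n : n<\omega\}$ is an infinite closed discrete subset. This contradicts countable compactness, completing the proof.

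I expect essentially no serious obstacle here; the content is the two routine verifications (that the singletons move off, and that a discrete open expansion yields a closed discrete point set), together with the standard fact that countable compactness is equivalent to the absence of infinite closed discrete subsets. The only mild subtlety is the appeal to the Hausdorff (or merely $T_1$) hypothesis to separate the chosen points and conclude closedness, which is available since all spaces in the paper are assumed Hausdorff.
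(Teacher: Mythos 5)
Your proof is correct. Note that the paper itself gives no proof of this lemma at all --- it is quoted from the cited references (McCoy--Ntantu and Gruenhage--Ma) --- and your argument is exactly the standard one from those sources: the singletons of a non-compact space form a moving off collection, and a discrete open expansion of infinitely many of them yields an infinite closed discrete set, contradicting countable compactness. Both verifications are carried out correctly, and you rightly use the full definition of the MOP (discrete open expansion) rather than the weaker ``infinite discrete subcollection'' form, which the paper only establishes as equivalent for locally compact spaces; the Hausdorff caveat you flag is covered by the paper's blanket assumption.
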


\begin{lem}[{~\cite{MN, Gruenhage}}]\label{lem5}
First countable spaces satisfying the MOP are locally compact.
\end{lem}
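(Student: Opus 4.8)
The plan is to prove the contrapositive: if $X$ is first countable and not locally compact, then $X$ fails the MOP. So fix a point $x$ with no compact neighborhood and a decreasing open neighborhood base $\{V_n : n < \omega\}$ at $x$. Note that $\overline{V_n}$ is never compact, since a compact set containing the open set $V_n$ would be a compact neighborhood of $x$. The goal is to exhibit a single moving off collection no infinite subcollection of which admits a discrete open expansion; by the definition of the MOP this is the desired contradiction. The tool for defeating expansions is the elementary remark that if an infinite subcollection $\{K_n\}$ accumulates at some point $q \in X$ (every neighborhood of $q$ meeting infinitely many $K_n$), then any open expansion $\{U_n\}$ with $K_n \subseteq U_n$ inherits this accumulation and so fails to be discrete at $q$. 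Thus it suffices to arrange that \emph{every} infinite subcollection of my compacta accumulates at some point of $X$, while the whole collection remains moving off.

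I would first dispose of the case in which $x$ has a neighborhood $V$ with $\overline{V}$ countably compact. Here $\overline{V}$ is countably compact but not compact, so $\mc{K} = \{\{y\} : y \in \overline{V}\}$ is already a moving off collection: if a compact $L$ satisfied $L \supseteq \overline{V}$ then $\overline{V}$ would be compact, so for any $L$ some $y \in \overline{V} \setminus L$ gives $\{y\} \cap L = \emptyset$. On the other hand an infinite subcollection is just an infinite set $D \subseteq \overline{V}$; were it discretely open-expanded, $D$ would be closed and discrete in $X$, contradicting the fact that the infinite subset $D$ of the countably compact space $\overline{V}$ has a limit point there (hence in $X$). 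So in this case $\mc{K}$ witnesses the failure of the MOP, and I may henceforth assume that no neighborhood of $x$ has countably compact closure; equivalently, each $\overline{V_n}$ contains an infinite closed discrete subset.

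In the remaining case I would build the collection by recursion, choosing nonempty compact sets $K_n$ so that (i) each $K_n$ reaches into $V_n$, forcing the family—and therefore every infinite subfamily—to accumulate, and (ii) no single compact set meets every $K_n$. Property (i) secures condition needed above to rule out discrete open expansions, while (ii) is precisely moving off. The \textbf{main obstacle} is the tension between (i) and (ii): if the $K_n$ merely shrink into the $V_n$, then selecting $p_n \in K_n \cap V_n$ yields $p_n \to x$, so $\{p_n\} \cup \{x\}$ is a compact set meeting every $K_n$ and moving off collapses. Non-local-compactness must be spent exactly to prevent such a convergent transversal: the accumulation has to be \emph{diffuse}, realized set-wise and never captured by any single choice of representatives. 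Concretely, I would make each $K_n$ ``fat'', interleaving a point of $V_n$ (to force accumulation) with pieces of the infinite closed discrete sets supplied by the non-countable-compactness of the $\overline{V_m}$ (to let representatives escape every compactum), and then verify, by a diagonal argument against the first-countable neighborhood filters, that every compact $L$ misses some $K_n$. The delicate point throughout is engineering the recursion so that the members accumulate yet their points cannot be herded into one compact set; once that is achieved, both (i) and (ii) hold, the MOP fails, and the contrapositive is complete.
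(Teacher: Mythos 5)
Two preliminary remarks: the paper itself gives no proof of Lemma \ref{lem5} (it is quoted from \cite{MN} and \cite{Gruenhage}), so your attempt must be judged on its own correctness; and your first case is in fact correct. If some neighborhood $V$ of $x$ has countably compact closure, your collection of singletons does witness failure of the MOP --- alternatively, this case is immediate from the paper's own toolkit: $\overline{V}$ is a closed subspace, the MOP is closed-hereditary (as used in the proof of Theorem \ref{thm10}), and Lemma \ref{lem4} then forces $\overline{V}$ to be compact, a contradiction.

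The remaining case, however, is where the entire content of the lemma lies, and there your plan is not merely incomplete but provably unworkable. You propose a countable recursion producing compacta $K_n$ with (i) $K_n \cap V_n \neq \emptyset$ and (ii) $\{K_n\}_{n<\omega}$ moving off. These two conditions are flatly incompatible, and the ``main obstacle'' you name is not an engineering difficulty to be finessed by ``diffuse'' or ``fat'' sets --- it is a theorem. Whatever the $K_n$ are, pick \emph{any} $p_n \in K_n \cap V_n$; since the $V_n$ form a decreasing base at $x$, every open set containing $x$ contains some $V_m$ and hence all $p_n$ with $n \geq m$, so $L = \{x\} \cup \{p_n : n < \omega\}$ is automatically compact, and $L$ meets every $K_n$. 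So no single compact set ever ``fails to be missed'': the family is simply not moving off. No choice of representatives can be dodged, because compactness of a convergent sequence together with its limit is free; diffuseness of the accumulation is irrelevant. The obstruction is even more general: if $\mc{K}$ is any collection (countable or not) all of whose infinite subcollections accumulate at the one point $x$, then for each $m$ only finitely many members miss $V_m$ (otherwise those members form a non-accumulating infinite subcollection), every member meeting all $V_m$ must contain $x$ (compact sets are closed), and a transversal of the remaining countably many members converges to $x$; again one compact set meets everything. Consequently any genuine witness must have its infinite subcollections clustering at points \emph{other than} $x$, varying with the subcollection --- this is exactly what the constructions in \cite{MN} and \cite{Gruenhage} arrange, and your proposal contains no mechanism for it. As written, the main case of your contrapositive fails, so the proof does not go through.
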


\begin{lem}[{~\cite{MN, Gruenhage}}]\label{lem6}
Locally compact, paracompact spaces satisfy the MOP.
\end{lem}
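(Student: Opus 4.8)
The plan is to use the reformulation proved just above: a locally compact space has the MOP precisely when every moving off collection contains an infinite discrete subcollection. So, given a moving off collection $\mc{K}$ for a locally compact, paracompact $X$, I want to extract an infinite discrete subfamily. The structural input is the standard fact that a locally compact, paracompact Hausdorff space is the topological sum $X = \bigoplus_{\alpha \in A} X_\alpha$ of clopen, $\sigma$-compact (equivalently Lindel\"of) subspaces. The decisive features of this decomposition are that any union of the pieces $X_\alpha$ is again clopen, and that each piece, being $\sigma$-compact and locally compact, admits an exhaustion $X_\alpha = \bigcup_n U^\alpha_n$ by open sets with $\overline{U^\alpha_n}$ compact and $\overline{U^\alpha_n} \subseteq U^\alpha_{n+1}$.

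For the construction, fix such exhaustions. I build $K_n \in \mc{K}$, a nondecreasing sequence of finite index sets $B_n \subseteq A$, and integers $d_n \uparrow \infty$, setting $C_n = \bigcup_{\alpha \in B_n} \overline{U^\alpha_{d_n}}$, a compact set; start with $C_0 = \emptyset$. At stage $n$, use the moving off property to pick $K_n \in \mc{K}$ disjoint from the compact set $C_n$; then let $B_{n+1}$ add to $B_n$ the finitely many pieces that $K_n$ meets, and choose $d_{n+1} > d_n$ large enough that $C_{n+1} \supseteq C_n \cup K_n$ and, for each $\alpha \in B_n$, $\overline{U^\alpha_{d_n}} \subseteq U^\alpha_{d_n+1} \subseteq \mr{int}(C_{n+1})$. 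Since only countably many stages occur and each $K_n$ meets only finitely many pieces, only countably many pieces are ever activated; their union $W = \bigoplus_{\alpha \in B_\infty} X_\alpha$, where $B_\infty = \bigcup_n B_n$, is therefore clopen and $\sigma$-compact, and because $d_n \to \infty$ it satisfies $W = \bigcup_n C_n$ with $C_n \subseteq \mr{int}(C_{n+1})$.

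It then remains to check that $\{K_n\}_{n<\omega}$ is discrete in $X$. Pairwise disjointness is immediate: for $j < n$ we arranged $K_j \subseteq C_{j+1} \subseteq C_n$, while $K_n \cap C_n = \emptyset$. For discreteness, take $x \in X$. If $x \notin W$, then $X \setminus W$ is an open neighborhood of $x$ meeting no $K_n$. If $x \in W = \bigcup_n \mr{int}(C_{n+1})$, then $x \in \mr{int}(C_m)$ for some $m$, and for every $n \geq m$ we have $K_n \cap C_n = \emptyset$ with $\mr{int}(C_m) \subseteq C_n$, so $\mr{int}(C_m)$ meets $K_n$ only for the finitely many $n < m$. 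Thus $\{K_n\}$ is locally finite; since its members are pairwise disjoint and compact, hence closed, in a Hausdorff space, local finiteness upgrades to discreteness: $x$ lies in at most one member, and the finitely many others it meets can be shrunk away in a smaller neighborhood.

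The main obstacle, and the precise place where paracompactness is indispensable, is guaranteeing that the escaping region $\bigcup_n C_n$ is closed. In an arbitrary locally compact space one can still run this "escape past a growing compactum" construction, but $\bigcup_n C_n$ is then merely an open $\sigma$-compact set, and a stray point $x$ lying in its closure but outside it may be an accumulation point of the $K_n$, destroying discreteness — indeed this is essentially how the MOP fails in non-paracompact examples such as a perfect pre-image of $\omega_1$. The clopen $\sigma$-compact decomposition is exactly what forces $W = \bigcup_n C_n$ to be clopen, so that the "outside" of the construction is genuinely separated by a clopen neighborhood; everything else is routine bookkeeping with the exhaustions.
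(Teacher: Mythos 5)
Your proof is correct. Note, though, that the paper offers no proof of this lemma at all---it is quoted from the cited sources \cite{MN, Gruenhage}, with only the remark that a stronger result (Lemma \ref{lemAlphaFavour}: $X$ locally compact is paracompact iff $C_k(X)$ is weakly $\alpha$-favorable) appears later. So there is nothing internal to compare against; what you have done is supply a self-contained, purely topological argument where the paper relies on the literature. Your route is the natural elementary one: the decomposition of a locally compact paracompact Hausdorff space into a topological sum of clopen $\sigma$-compact pieces, exhaustions $\overline{U^\alpha_n} \subseteq U^\alpha_{n+1}$ of each piece, and the ``escape past a growing compactum'' recursion, with the clopenness of the union $W$ of activated pieces handling points outside the construction. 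All the steps check out: $C_n \subseteq \mr{int}(C_{n+1})$ follows from the exhaustion property, each compact $K_n$ meets only finitely many summands so $C_{n+1} \supseteq C_n \cup K_n$ is achievable with $B_{n+1}$ finite, $W = \bigcup_n C_n$ is clopen, and the local-finiteness-to-discreteness upgrade is valid for pairwise disjoint compact sets in a Hausdorff space. Invoking the paper's Lemma from \cite{G} (infinite discrete subcollection suffices, no open expansion needed) is legitimate since that lemma is proved in the paper. By contrast, the cited sources essentially obtain the lemma as a byproduct of the stronger game-theoretic result: weak $\alpha$-favorability of $C_k(X)$ implies Baireness, which by Gruenhage's characterization gives the MOP; your argument avoids function spaces and games entirely, at the cost of the bookkeeping. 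It is also worth observing that your recursion is the countable prototype of the paper's own proof of Theorem \ref{thm12a}, which runs the same escape construction along an $\omega_1$-filtration with club sets in place of your index sets $B_n$.
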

A stronger result is in Lemma \ref{lemAlphaFavour} below

\section{Locally compact, perfectly normal spaces and the MOP}

Marion Scheepers asked us whether locally compact, perfectly normal spaces satisfy the MOP, and whether - if they do - they are paracompact.  Here are the answers, modulo a supercompact cardinal.

\begin{thm}\label{thm7}
There is a model of PFA$(S)[S]$ in which locally compact, perfectly normal spaces are paracompact and hence satisfy the MOP.
\end{thm}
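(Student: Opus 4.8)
The plan is to derive Theorem \ref{thm7} almost entirely from results already assembled in the excerpt, so that the real work is recognizing which prior statement does the job. First I would observe that Corollary \ref{cor2} already asserts precisely the paracompactness half of the theorem: in a model of PFA$(S)[S]$ (whose existence, modulo a supercompact, is guaranteed by the construction of \cite{LT1,LT2,Tallb}), every locally compact, perfectly normal space is paracompact. So the first step is simply to cite that model and invoke Corollary \ref{cor2} to conclude paracompactness.

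Second, I would bridge from paracompactness to the MOP. Here Lemma \ref{lem6} is exactly the tool: locally compact, paracompact spaces satisfy the MOP. Thus any locally compact, perfectly normal space in this model is paracompact by Corollary \ref{cor2}, hence locally compact and paracompact, hence satisfies the MOP by Lemma \ref{lem6}. This chains the two cited results into the desired ``paracompact and hence satisfy the MOP'' conclusion, matching the phrasing of the theorem verbatim.

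The only genuine content I would need to be careful about is the logical status of the ambient model. The theorem is an existence statement (``there is a model\dots''), so I must make explicit that the model of PFA$(S)[S]$ is the one fixed in \cite{LT1,LT2,Tallb}, and that Corollary \ref{cor2} and Lemma \ref{lem6} both hold in it — the latter being a ZFC theorem, so it transfers automatically, while the former is specifically a PFA$(S)[S]$ consequence. No new forcing or combinatorial argument is required; the theorem is essentially a repackaging of Scheepers's two questions into a single statement answerable by quoting Corollary \ref{cor2} (paracompactness) and Lemma \ref{lem6} (MOP).

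The step I expect to be least routine is not a mathematical obstacle but a bookkeeping one: confirming that Corollary \ref{cor2} is stated for the same model and under the same hypotheses (locally compact plus perfectly normal, with no extra separation or cardinality assumptions) so that the two lemmas compose cleanly. Since the excerpt states Corollary \ref{cor2} with exactly those hypotheses and Lemma \ref{lem6} as an unconditional ZFC fact, I anticipate the proof to be a two-line deduction rather than anything requiring new ideas.
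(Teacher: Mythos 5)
Your proposal is correct and is exactly the paper's own argument: the paper proves Theorem \ref{thm7} by citing Corollary \ref{cor2} for paracompactness in the PFA$(S)[S]$ model and Lemma \ref{lem6} to pass from locally compact plus paracompact to the MOP. Nothing further is needed.
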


\begin{thm}\label{thm8}
There is a model in which there is a locally compact, perfectly normal space which does not satisfy the MOP.
\end{thm}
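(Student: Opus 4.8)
The plan is to construct a counterexample in a model where the relevant paracompactness theorems fail. Since Theorem~\ref{thm7} shows that consistently all locally compact, perfectly normal spaces are paracompact (and hence satisfy the MOP by Lemma~\ref{lem6}), the failure of the MOP here must come from a space that is perfectly normal but \emph{not} paracompact. The natural strategy is to work under a hypothesis that produces a locally compact, perfectly normal, non-paracompact space with enough ``countable compactness'' to violate the MOP via Lemma~\ref{lem4}. First I would recall that $\diamondsuit$, or more economically CH together with a suitable guessing principle, is known to produce a \emph{perfectly normal, locally compact, countably compact, non-compact} space on $\omega_1$ — a Kunen-style or Ostaszewski-style construction. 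The point of perfect normality is delicate: Ostaszewski's original space is perfectly normal under $\diamondsuit$, and it is locally compact and countably compact but not compact.

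With such a space $X$ in hand, the MOP failure is almost immediate: by Lemma~\ref{lem4}, a countably compact space satisfying the MOP is compact, so a countably compact, non-compact, locally compact space automatically fails the MOP. Thus the second key step is simply to invoke Lemma~\ref{lem4} in contrapositive form. The entire theorem then reduces to exhibiting, in some model, a locally compact, perfectly normal, countably compact, non-compact space. So the proof is really a \emph{pointer} to an existing construction together with one short lemma application.

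The main obstacle will be verifying \emph{perfect normality} of the chosen space, since this is exactly the property that is hard to arrange simultaneously with countable compactness and non-compactness. Ordinary countably compact non-compact spaces (such as $\omega_1$ with the order topology) are not perfectly normal, so one cannot use the trivial example; one genuinely needs the Ostaszewski space or a relative. I would therefore spend the bulk of the argument either citing the Ostaszewski construction under $\diamondsuit$ and checking its hypotheses fit (locally compact, perfectly normal, countably compact, non-compact — all of which are established in the literature), or, if a cleaner model is preferred, noting that $\diamondsuit$ holds in the relevant forcing extension. Once perfect normality is secured, local compactness and countable compactness are built into the construction, and non-compactness follows from the space having underlying set $\omega_1$ with no point of ``completion.'' The final sentence of the proof records that Lemma~\ref{lem4} delivers the MOP failure, completing the argument.

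\begin{rem}
It is worth flagging the contrast with Theorem~\ref{thm7}: the two theorems together show that ``locally compact, perfectly normal spaces satisfy the MOP'' is independent of ZFC, with $\diamondsuit$ (or CH-type constructions) refuting it and PFA$(S)[S]$ confirming it.
\end{rem}
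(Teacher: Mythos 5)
Your proposal is correct and matches the paper's proof exactly: the paper also cites Ostaszewski's space (constructed from $\diamondsuit$), which is locally compact, perfectly normal, countably compact but not compact, and applies Lemma~\ref{lem4} in contrapositive to conclude the MOP fails.
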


\begin{proof}[Proofs]
Theorem \ref{thm7} follows from Corollary \ref{cor2} plus Lemma \ref{lem6}.  Theorem \ref{thm8} follows from Lemma \ref{lem4}, since \textit{Ostaszewski's space} \cite{O}, constructed from $\diamondsuit$, is locally compact, perfectly normal, countably compact, but not compact.
\end{proof}

For the other question, obviously Corollary \ref{cor2} answers it one way; for the other, we quote:

\begin{lem}[{~\cite{Ma}}]\label{lem9}
MA$_{\omega_1}$ implies there is a locally compact perfectly normal space with the MOP which is not paracompact.
\end{lem}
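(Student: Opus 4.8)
The plan is to realize the required space as a \emph{Cantor tree} built over a $Q$-set, using MA$_{\omega_1}$ both to supply the $Q$-set and, more essentially, to extract discrete subcollections from moving off collections. Recall that MA$_{\omega_1}$ forces $2^{\aleph_0} > \aleph_1$ and makes every set of reals of size $\aleph_1$ a $Q$-set, i.e.\ an uncountable set each of whose subsets is relatively $G_\delta$. So fix such an $R \subseteq 2^\w$ of size $\aleph_1$, identify $2^{<\w}$ with $\w$, and let $X = 2^{<\w} \cup R$ carry the tree topology: each node $s \in 2^{<\w}$ is isolated, while the basic neighborhoods of a branch $x \in R$ are the tails $\{x\} \cup \{x \restriction n : n \geq m\}$. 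Then $X$ is locally compact (each such tail, together with $x$, is a compact neighborhood), first countable, zero-dimensional, and separable, with $2^{<\w}$ a countable dense set of isolated points.

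First I would dispose of the two easy properties. For perfect normality, the tree coding identifies each $S \subseteq R$ with a subset of $2^\w$, and the $Q$-set hypothesis makes $S$ relatively $G_\delta$ in $R$; one uses this to separate disjoint subfamilies of the closed discrete set $R$ by disjoint open sets and to exhibit every closed subset of $X$ as a $G_\delta$, giving perfect normality. For non-paracompactness, note that $R$ is a closed discrete subspace of size $\aleph_1$, whereas a separable, locally compact, paracompact space decomposes as a countable free sum of clopen $\sigma$-compact pieces and is therefore Lindel\"of, hence cannot contain an uncountable closed discrete set; thus $X$ is not paracompact. (Observe this is consistent with Lemma~\ref{lem6}: paracompactness would have handed us the MOP for free.)

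The substantive step is the MOP, for which I would invoke the reformulation of~\cite{G}: it suffices to show that every moving off collection $\mc{K}$ for $X$ contains an infinite discrete subcollection. I would first normalize, using that $R$ is closed discrete so each $K \in \mc{K}$ meets $R$ finitely and hence lies in a finite union of branch-tails together with a finite set of nodes. The only obstruction to discreteness is that infinitely many members of $\mc{K}$ might ``converge'' to a single branch by containing nodes arbitrarily high along it. To defeat this simultaneously over all $\aleph_1$ branches, I would set up the natural partial order whose conditions are finite subfamilies of $\mc{K}$ that are discrete on a finite initial part of the tree, ordered by end-extension. The dense sets $D_n$ (``the chosen subfamily has at least $n$ members,'' met by pushing past an ever-larger compactum via the moving off assumption) and $E_x$ (``branch $x$ has a tail meeting the chosen subfamily at most once'') number $\aleph_1$ altogether, so MA$_{\omega_1}$ yields a filter meeting all of them, and the union of that filter is the desired infinite discrete subcollection.

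The hard part will be this third step, and within it the verification that the selection poset is ccc. Here I expect to need a $\Delta$-system reduction on the finite node-parts of an uncountable set of conditions, combined with the almost-disjointness of distinct branches, to show that two conditions with the same root behave compatibly. This is precisely the place where non-paracompactness and the MOP are reconciled: paracompactness fails because of an $\aleph_1$-sized closed discrete set, an $\w_1$-level obstruction that MA$_{\omega_1}$ does not remove, whereas each moving off collection threatens convergence to any given branch only countably often, so the $\aleph_1$ genericity demands lie exactly within the reach of MA$_{\omega_1}$.
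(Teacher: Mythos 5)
You have chosen the right space: the Cantor tree $X = 2^{<\omega} \cup R$ over a set $R$ of reals of size $\aleph_1$ is exactly the example the paper attributes to \cite{Ma} (example a) of Section 3; the paper itself gives no proof of Lemma \ref{lem9}, only the citation). Your first two steps are essentially sound: under MA$_{\omega_1}$ the set $R$ is a $Q$-set, which yields normality (hence perfect normality, since $X$ is a Moore space and Moore spaces are perfect), and separability together with the uncountable closed discrete set $R$ rules out paracompactness. The gap is in the third step, and it is not merely a difficult verification left open: the selection poset you propose is \emph{not} ccc, so MA$_{\omega_1}$ cannot be applied to it at all. To see this, let $\mathcal{K}$ be the moving off collection of all compact branch-tails $T_{x,m} = \{x\} \cup \{x \restriction k : k \geq m\}$ for $x \in R$, $m < \omega$ (this is moving off: a compact $L$ contains only finitely many points of $R$, and meets any branch whose branch point is not in $L$ in only finitely many nodes). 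An infinite discrete subcollection must in particular be pairwise disjoint, so in any formalization of your poset a common extension of two conditions must keep all chosen members pairwise disjoint, and two conditions whose members meet are incompatible. But for each $x \in R$ the singleton $\{T_{x,0}\}$ is a condition, and any two such members intersect, since both contain the root node; thus these $\aleph_1$ conditions form an uncountable antichain. Nor can the $\Delta$-system reduction you anticipate repair this: given any assignment $x \mapsto T_{x,m_x}$ on an uncountable set of branches, pigeonhole yields uncountably many $x$ with the same value $m_x = m$ and the same restriction $x \restriction (m+1)$, and any two of these tails share the node $x \restriction m$. The obstruction is that every node lies on uncountably many branches, so almost-disjointness of branches never converts into actual disjointness of the compact sets --- and actual disjointness is what discreteness demands. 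Since this antichain is constructed outright in ZFC, no axiom can rescue the ccc of this poset.

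The repair requires a genuinely different idea, and it is the heart of Ma's proof: one must exploit the smallness of $R$ \emph{as a set of reals}, not just $|R| = \aleph_1$ together with per-collection genericity. In \cite{Ma}, Baireness of $C_k(T_R)$ (equivalently the MOP, by the results quoted in the Introduction) is characterized in terms of the $\gamma$-property of $R \subseteq 2^\omega$, and the proof is finished by the Galvin--Miller theorem that $\mathfrak{p} > \omega_1$ (a consequence of MA$_{\omega_1}$) implies every set of reals of size $\aleph_1$ is a $\gamma$-set. In your language the residual problem, after choosing candidates $K_n \in \mathcal{K}$ disjoint from larger and larger compacta, is to prevent the levels $\max\{k : y \restriction k \in K_n\}$ from tending to infinity at any of the $\aleph_1$ branches $y \in R$; this is a covering/domination problem for $R$ over the countable set of nodes, and it is there --- not in a ccc poset of finite subfamilies --- that MA$_{\omega_1}$ does its work. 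That some such use of the structure of $R$ is unavoidable can be seen from the fact that the MOP for Cantor trees over sets of size $\aleph_1$ is not a ZFC theorem (under CH it fails for suitable $R$, by Ma's characterization), in contrast with the ladder-system and tree examples of Theorem \ref{thm12a}, whose MOP the paper proves outright in ZFC.
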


\section{Counterexamples}
Although the question of whether locally compact normal spaces with the MOP are paracompact has not been answered in ZFC, there are a number of consistent counterexamples which repurpose spaces familiar to normal Moore space fans.  a)-f) are not collectionwise Hausdorff, hence not paracompact.  Each is normal in some model.
\begin{enumerate}
\item[a)]{\cite{Ma} The Cantor tree on a set of reals of size $\aleph_1$ is normal and has the MOP under MA$_{\omega_1}$.
}
\end{enumerate}
\begin{defn}
A ladder system $\{\lambda_\alpha\}_{\alpha \in S}$, where $S$ is a subset of some ordinal $\lambda$, is a set of sequences, where each $\lambda_\alpha$ is strictly increasing, converges to $\alpha$, and has range disjoint from $S$. The corresponding ladder system space on $S \cup \bigcup\set{\op{range}\lambda_\alpha : \alpha \in S}$ has the points in each $\op{range} \lambda_\alpha$ isolated, while a basic open set about $\alpha \in S$ is $\{\alpha\}~\cup~$a tail of $\lambda_\alpha$.
\end{defn}

\begin{enumerate}
\item[b)]{\cite{Gruenhage} A ladder system space on a stationary subset of $\omega_1$ has the MOP, and also is normal under MA$_{\omega_1}$.
}
\end{enumerate}
Note the first example is separable, while countable sets have countable closures in the second one.
\begin{enumerate}
\item[c)]{
There is also a version of b) consistent with CH, indeed with $\diamondsuit$.  See \cite{S}, \cite{E}, \cite{LT1}.
}
\end{enumerate}
We shall show that the idea of the proof of the MOP for b) (and hence c)) can be used to establish the MOP for:
\begin{enumerate}
\item[d)]{
The tree topology on a a special Aronszajn tree.  This is known to be non-collectionwise Hausdorff, and to be normal under MA$_{\omega_1}$ \cite{F}.
}
\end{enumerate}
as well as for the space of:
\begin{enumerate}
\item[e)]{
Devlin and Shelah \cite{DS} isolate some points of a special Aronszajn tree and manage to force normality while keeping CH.
}
\end{enumerate}

Generalizing the proof in \cite{Gruenhage} that a ladder system space on a stationary subset of $\omega_1$ has the MOP, we obtain:

\begin{thm}\label{thm12a}
Suppose $X$ is locally compact, locally countable, countable sets have countable closures, and $X = \bigcup_{\gamma < \omega_1}X_\gamma$, where each $X_\gamma$ is countable, $X_\gamma \subsetneq X_{\gamma+1}$, and for $\gamma$ a limit, $X_\gamma = \bigcup_{\alpha < \gamma}X_\alpha$.  Further suppose that for $\gamma$ a limit, for each $x$ in the boundary of $X_\gamma$, there is a compact neighborhood $N(x)$ such that for each $\alpha < \gamma$, $N(x) \cap X_\alpha$ is compact.  Then $X$ has the MOP.
\end{thm}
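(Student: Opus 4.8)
The plan is to reduce to the criterion of \cite{G} quoted above: since $X$ is locally compact, it suffices to show that every moving off collection $\mc{K}$ for $X$ includes an infinite discrete subcollection. I would build such a subcollection by a recursion of length $\omega$ that climbs the filtration $\{X_\gamma\}_{\gamma<\omega_1}$, and then use the hypothesis on boundary points to defeat accumulation. First I would record two structural facts. Because $X$ is locally countable, every point has a countable neighbourhood, so every compact subset of $X$ is countable; together with the continuity of the filtration this shows that each compact $K\subseteq X$ lies in some $X_\gamma$, and I write $\op{ht}(K)$ for the least such $\gamma$.

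Next I would dispose of the \emph{bounded} case. If all members of $\mc{K}$ lie in a single $X_\gamma$, then $\mc{K}$ lives in the closed set $\overline{X_\gamma}$, which is countable and, being closed in a locally compact space hence locally compact and Lindel\"of, is $\sigma$-compact and therefore paracompact. Since $\mc{K}$ is still moving off inside $\overline{X_\gamma}$, Lemma \ref{lem6} hands back an infinite subcollection with a discrete open expansion in $\overline{X_\gamma}$; as a family discrete in a closed subspace is discrete in $X$, we are done. Thus I may assume $\mc{K}$ is \emph{unbounded}: for every $\gamma<\omega_1$ some member of $\mc{K}$ has height exceeding $\gamma$.

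In the unbounded case I would construct recursively limit ordinals $\gamma_0<\gamma_1<\cdots$ and members $K_n\in\mc{K}$, using the moving off property at stage $n$ to choose $K_n$ disjoint from the compact set $K_0\cup\cdots\cup K_{n-1}$ and of height large enough to drive $\op{ht}(K_n)\to\gamma:=\sup_n\gamma_n$, while (so far as the moving off property permits) keeping $K_n$ off the lower levels so that it sits in the band between $X_{\gamma_n}$ and $X_{\gamma_{n+1}}$. Once the $K_n$ are pairwise disjoint compact, hence closed, sets, discreteness reduces to \emph{local finiteness}: a locally finite family of pairwise disjoint closed sets is automatically discrete, since any point lies in at most one member and can be separated from the finite union of the remaining ones that meet a given neighbourhood.

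So the heart of the matter, and the step I expect to be the main obstacle, is establishing local finiteness of $\{K_n\}$, i.e.\ ruling out an accumulation point. Points outside $\overline{X_\gamma}$ are separated at once by $X\setminus\overline{X_\gamma}$, since each $K_n\subseteq X_\gamma$; and because countable sets have countable closures, any accumulation point of $\bigcup_n K_n$ lies in the countable closed set $\overline{X_\gamma}$. For an accumulation point $y$ lying on the boundary of $X_\gamma$, the final hypothesis is exactly what is needed: it supplies a compact neighbourhood $N(y)$ with $N(y)\cap X_\alpha$ compact for every $\alpha<\gamma$, and I would exploit the fact that the compact ``slices'' $N(y)\cap X_{\gamma_n}$ increase while the heights of the $K_n$ climb to $\gamma$, forcing all but finitely many of the bands $N(y)\cap(X_{\gamma_{n+1}}\setminus X_{\gamma_n})$—and hence all but finitely many $N(y)\cap K_n$—to be empty, contradicting accumulation at $y$. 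The delicate point, which is where the generalization of Gruenhage's argument really lives, is to arrange the recursion robustly enough that the \emph{only} candidate accumulation points are boundary points of $X_\gamma$ (so that the hypothesis can finish the job), ruling out accumulation at points of $X_\gamma$ itself; this is the analogue here of the stationarity and pressing-down step used for the ladder system space in \cite{Gruenhage}, and I expect it to require interleaving the choice of the $K_n$ with the compact neighbourhoods furnished by local compactness and by the final hypothesis.
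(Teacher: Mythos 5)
Your proposal has the right outer shell (climb to a limit level $\gamma=\sup_n\gamma_n$, get disjoint compact sets, reduce discreteness to local finiteness, use the $N(x)$ hypothesis at boundary points of $X_\gamma$), but the engine of the recursion is unworkable, and the two places you flag as delicate are exactly where the proof has to live. The moving off property cannot give you members of large height, nor members that ``sit in the band between $X_{\gamma_n}$ and $X_{\gamma_{n+1}}$'': it only produces a member disjoint from a prescribed \emph{compact} set, and the levels $X_{\gamma_n}$ and their closures are countable but in general not compact, so they cannot be moved off of. Indeed, nothing prevents a moving off collection in which \emph{every} member meets $\overline{X}_{\gamma_0}$ (attach to each member one point of an infinite closed discrete subset of $\overline{X}_{\gamma_0}$, when one exists); then your band structure can never get started, yet the theorem still has to be proved. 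The paper controls heights in the opposite direction: each chosen $A_j$ is forced to be \emph{contained in} $X_{\gamma_j}$. Since the moving off property alone cannot deliver that containment, it is extracted by a club argument your outline has no counterpart of: each countable level $X_\alpha$ has a countable base $\mc{B}_\alpha$ of compact sets open in $X_\alpha$; one defines $\delta(\alpha)$ so that every $B\in\mc{B}_\alpha$ is avoided by some $A\in\mc{A}$ with $A\subseteq X_{\delta(\alpha)}$; and the $\gamma_n$ are chosen in the club $C'$ of ordinals closed under $\delta$, intersected with the club of $\alpha$ such that $x\in X_\alpha$ implies $N(x)\subseteq X_\alpha$ (which makes the levels $X_\alpha$ open). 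Then the compact set to be avoided at stage $j$ --- the earlier $A_k$, the basic sets $B_{m,k}$ with $m,k<j$, and the compact slices $N(x_{\gamma,i})\cap X_{\gamma_n}$ with $n,i<j$ --- lies in a level below $\gamma_j$, is covered by finitely many basic sets, and hence is avoided by some $A_j\subseteq X_{\gamma_j}$. Avoiding the $B_{m,k}$ is also what settles your unresolved ``delicate point'': since $\{B_{m,k}:m,k<\omega\}$ is a base for $X_\gamma$ consisting of sets open in $X$, every point of $X_\gamma$ has a neighborhood meeting only finitely many $A_j$, so accumulation inside $X_\gamma$ is ruled out outright; no pressing-down analogue is needed at that step.

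Your mechanism at the boundary also rests on a false claim. You want the bands $N(y)\cap(X_{\gamma_{n+1}}\setminus X_{\gamma_n})$ to be eventually empty; in fact they are \emph{cofinally nonempty}, precisely because $y$ is a boundary point: $N(y)\cap X_{\gamma_n}$ is compact, hence closed, and omits $y$, so some neighborhood of $y$ inside $N(y)$ misses $X_{\gamma_n}$ entirely, and it must still meet $X_\gamma$ --- in points of height above $\gamma_n$. What makes boundary points harmless in the paper is not that $N(y)$ misses high bands, but that each $A_j$ is \emph{chosen} disjoint from the slices $N(x_{\gamma,i})\cap X_{\gamma_n}$ for $n,i<j$, and then the containment $A_j\subseteq X_{\gamma_j}$ converts this into $A_j\cap N(x_{\gamma,i})=\emptyset$ for $j>i$. (This is also why the hypothesis concerns the compact traces $N(x)\cap X_\alpha$ rather than $N(x)$ itself: $N(x)$ is not a subset of any level below $\gamma$, so it cannot be fed into the $\delta$-mechanism, but its traces can.) Your bounded case and the reduction to finding an infinite discrete subcollection are correct but peripheral --- the paper's construction needs no such dichotomy. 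As it stands, the recursion you describe cannot be carried out, and the two central steps are respectively missing and based on a false statement, so there is a genuine gap.
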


\begin{proof}
Since countable sets have countable closures, without loss of generality we may assume that $\overline{X}_\alpha \subseteq X_{\alpha+1}$.  Since compact sets are countable, 
\[C = \set{\alpha : x \in X_\alpha \text{ implies } N(x) \subseteq X_\alpha}\]
is closed unbounded.  Since $X$ is first countable, each $X_\alpha$ has a countable base $\mc{B}_\alpha$ of compact sets open in $X_\alpha$.  For $\alpha \in C$, $X_\alpha$ is open, so these sets are open in $X$. 

Let $\mc{A}$ be a moving off collection for $\mc{X}$.  For any $\alpha < \omega_1$, there is a countable ordinal $\delta(\alpha) \geq \alpha$ such that for $B \in \mc{B}_\alpha$,  there is an $A \in \mc{A}$ such that $A \subseteq X_{\delta(\alpha)}$ and $A$ is disjoint from $B$.  Then
\[C' = \set{\alpha \in C : \beta < \alpha \text{ implies } \delta(\beta) < \alpha}\]
is closed unbounded.  Take a strictly increasing sequence $\{\gamma_n\}_{n < \omega}$ in $C'$ and let $\gamma = \sup_n \gamma_n$.  Let $\{B_{m, k} : m < \omega\}$ enumerate the basic compact open sets of $X_{\gamma_k}$.  Note $\bigcup \{B_{m,k}: m,k<\omega\}$ is a basis for $X_\gamma$.  Let $\{x_{\gamma, i} : i < \omega\}$ enumerate $\overline{X}_\gamma - X_\gamma$.  For each $j < \omega$, there is an $A_j \in \mc{A}$ with $A_j \subseteq X_{\gamma_j}$ and $A_j \cap \left(\bigcup_{k < j}A_k \cup \bigcup_{m, k < j}B_{m, k} \cup \bigcup_{n, i < j}(N(x_{\gamma, i}) \cap X_{\gamma_n})\right) = \emptyset$.
Then $\{A_j\}_{j < \omega}$ is locally finite in $X_\gamma$, since each $B_{m, k}$ eventually misses the $A_j$'s.  The $x_{\gamma, i}$'s are then the only possible limits of the $A_j$'s.  But $N(x_{\gamma, i})$ is disjoint from $A_j$ for $j > i$.  Thus the $A_j$'s are locally finite in $X$.  Since the $A_j$'s are also closed disjoint, in fact the collection is discrete.
\end{proof}

Note that by Lemma \ref{lem3}, Theorem \ref{thm12a} does not offer a roadmap for constructing a locally compact normal space with the MOP which is not paracompact.

We note, for future reference, that:

\begin{cor}\label{cor13a}
A countable topological sum of spaces satisfying the hypotheses of Theorem \ref{thm12a} also has the MOP.
\end{cor}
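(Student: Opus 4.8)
The plan is to deduce the corollary from Theorem \ref{thm12a} itself, rather than re-running the moving-off argument. Write the sum as $X = \bigoplus_{n < \omega} X_n$, where each summand $X_n$ satisfies the hypotheses of Theorem \ref{thm12a}, witnessed by a filtration $X_n = \bigcup_{\gamma < \omega_1} X_n^\gamma$ together with compact neighborhoods $N_n(x)$ at the relevant boundary points. Each $X_n$ is clopen in $X$. I would then define the \emph{diagonal} filtration $X^\gamma = \bigoplus_{n < \omega} X_n^\gamma$, using the same level index $\gamma$ in every summand, and check that it witnesses the hypotheses of Theorem \ref{thm12a} for $X$; the corollary then follows at once.

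First I would dispose of the three ``global'' hypotheses. Local compactness and local countability pass to $X$ because each point lies in exactly one clopen summand and already has its compact, respectively countable, neighborhood there. For countable closures, a countable $S \subseteq X$ meets each $X_n$ in a countable set, so $\overline{S} = \bigcup_n \overline{S \cap X_n}$ is a countable union of countable sets, hence countable. Next I would verify the filtration properties: each $X^\gamma$ is a countable union of countable sets, hence countable; $X^\gamma \subsetneq X^{\gamma+1}$ since each $X_n^\gamma \subsetneq X_n^{\gamma+1}$; and continuity at limits, $X^\gamma = \bigcup_{\alpha < \gamma} X^\alpha$ for $\gamma$ a limit, holds summand-by-summand, a point of $X_n$ entering $X^\gamma$ exactly when it enters some $X_n^\alpha$ with $\alpha < \gamma$.

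The only genuinely substantive point is the compact-neighborhood condition at limit stages, and this is where I would use that the summands are clopen. For $\gamma$ a limit, the boundary of $X^\gamma$ in $X$ is $\bigcup_n \partial X_n^\gamma$, the boundary computed inside each $X_n$, precisely because each $X_n$ is clopen. Hence any $x$ in the boundary of $X^\gamma$ lies in a single summand $X_n$ and in the boundary of $X_n^\gamma$ there, so I take $N(x) = N_n(x)$. Since $N(x) \subseteq X_n$ and $X_n$ is clopen, for every $\alpha < \gamma$ we have $N(x) \cap X^\alpha = N(x) \cap X_n^\alpha$, which is compact by the hypothesis on $X_n$. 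With all hypotheses verified, Theorem \ref{thm12a} yields the MOP for $X$.

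I do not expect any serious obstacle here, as the clopenness of the summands makes every local condition localize to a single $X_n$. The one thing to be careful about is synchronizing the level index $\gamma$ across the summands: taking the \emph{same} $\gamma$ in all of them is exactly what keeps each $X^\gamma$ countable and makes the filtration continuous at limits, whereas any attempt to diagonalize with distinct indices $\gamma_n$ would break countability of the levels.
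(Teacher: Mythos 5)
Your proposal is correct and is exactly the paper's approach: the paper's entire proof is the one-line observation that ``the sum also satisfies these hypotheses,'' which your diagonal filtration $X^\gamma = \bigoplus_{n<\omega} X_n^\gamma$ and clopenness argument verify in detail. Your careful check of the boundary and compact-neighborhood conditions at limit stages is precisely the content the paper leaves implicit.
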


\begin{proof}
The sum also satisfies these hypotheses.
\end{proof}

\begin{thm}\label{thm13new}
Suppose $X$ is locally compact, locally countable, $\abs{X} \geq 2^{\aleph_0}$, and every closed subspace of size $2^{\aleph_0}$ has the MOP.  Then $X$ has the MOP.
\end{thm}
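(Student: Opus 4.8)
The plan is to use the characterization from \cite{G}: since $X$ is locally compact, it suffices to show that every moving off collection $\mc{A}$ for $X$ includes an infinite discrete subcollection. Two preliminary facts drive the argument. First, because $X$ is locally countable, every compact subset of $X$ is countable, as a compact set is covered by finitely many countable neighborhoods. Second, $X$ is first countable: each point has a neighborhood whose closure is compact and therefore countable, and a countable compact Hausdorff space is metrizable, hence supplies a countable neighborhood base. I would then try to reflect an arbitrary moving off collection into a closed subspace of size exactly $2^{\aleph_0}$ and apply the hypothesis there.

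Concretely, fix a large $\theta$ and choose an elementary submodel $M \prec H(\theta)$ having $X$, $\mc{A}$, and the topology of $X$ as elements, with $\abs{M} = 2^{\aleph_0}$ and with $M$ closed under countable subsets (every countable subset of $M$ belongs to $M$). Such an $M$ exists by the usual continuous closing-off construction of length $\omega_1$, since $(2^{\aleph_0})^{\aleph_0} = 2^{\aleph_0}$ and $\operatorname{cf}(2^{\aleph_0}) > \aleph_0$. Using $\abs{X} \geq 2^{\aleph_0}$, I would arrange that $M$ contains a fixed set of $2^{\aleph_0}$ points of $X$, so that $Y := X \cap M$ has size exactly $2^{\aleph_0}$.

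The crux is two claims about $Y$. First, $Y$ is closed in $X$: if $x \in \overline{Y}$, first countability gives a sequence from $Y$ converging to $x$; this sequence is a countable subset of $M$, hence an element of $M$, and its unique limit is definable from it, so $x \in X \cap M = Y$. Second, $\mc{A}_Y := \set{A \in \mc{A} : A \subseteq Y}$ is moving off for $Y$: given a compact $L \subseteq Y$, note $L$ is countable and $L \subseteq M$, so $L \in M$; since $\mc{A}$ is moving off, $H(\theta)$ satisfies $\exists A \in \mc{A}\,(A \cap L = \emptyset)$, and by elementarity there is such an $A \in \mc{A} \cap M$. As $A$ is countable with $A \in M$, we get $A \subseteq M$, whence $A \subseteq X \cap M = Y$, so $A \in \mc{A}_Y$ witnesses the moving-off property at $L$.

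Finally, $Y$ is a closed subspace of the locally compact $X$, hence locally compact and of size $2^{\aleph_0}$, so by hypothesis $Y$ has the MOP; by \cite{G} the moving off collection $\mc{A}_Y$ contains an infinite subcollection $\set{A_n}_{n < \omega}$ that is discrete in $Y$. Because $Y$ is closed, this subcollection is discrete in $X$ as well: a point outside $Y$ has a neighborhood missing $Y$, while for a point of $Y$ a $Y$-neighborhood meeting only one $A_n$ extends to an $X$-neighborhood meeting only that $A_n$ (each $A_n \subseteq Y$). Thus $\mc{A}$ has an infinite discrete subcollection, and $X$ has the MOP by \cite{G}. I expect the main obstacle to be the reflection step, namely verifying that the restricted collection $\mc{A}_Y$ is still moving off for $Y$; this is precisely where both the countability of compact sets and the closure of $M$ under countable subsets are essential, since without them one could not guarantee that the witnessing member of $\mc{A}$ lands inside $Y$.
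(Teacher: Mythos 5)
Your proof is correct and follows essentially the same route as the paper's: a countably closed elementary submodel $M \prec H(\theta)$ of size $2^{\aleph_0}$, the observation that $X \cap M$ is closed by first countability, reflection of the moving off collection into $X \cap M$ using the countability of compact sets, and application of the hypothesis to that closed subspace. You in fact spell out several details the paper leaves implicit (that local compactness plus local countability yields first countability, that $\abs{X \cap M}$ can be made exactly $2^{\aleph_0}$, and that countable members of $\mc{A} \cap M$ lie inside $X \cap M$), all of which are accurate.
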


\begin{cor}\label{corCH1}
CH implies if $X$ is locally compact, locally countable, and closed subspaces of size $\aleph_1$ have the MOP, then so does $X$.
\end{cor}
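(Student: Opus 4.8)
The plan is to reduce directly to Theorem~\ref{thm13new}, using CH only to identify the two cardinals appearing in its hypotheses. Under CH we have $2^{\aleph_0} = \aleph_1$, so the assumption that closed subspaces of size $\aleph_1$ have the MOP is literally the assumption that closed subspaces of size $2^{\aleph_0}$ have the MOP. The one gap between the corollary and the theorem is the theorem's cardinality requirement $\abs{X} \geq 2^{\aleph_0}$, which the corollary does not impose; I would therefore split on $\abs{X}$.

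If $\abs{X} \geq \aleph_1 = 2^{\aleph_0}$, then every hypothesis of Theorem~\ref{thm13new} is met ($X$ is locally compact, locally countable, of size at least $2^{\aleph_0}$, and its closed subspaces of size $2^{\aleph_0}=\aleph_1$ have the MOP), and the conclusion that $X$ has the MOP is immediate.

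It remains to treat $\abs{X} < \aleph_1$, i.e.\ the case that $X$ is countable; here the hypothesis on closed subspaces of size $\aleph_1$ is vacuous and supplies nothing, so this case must be handled on its own. A countable space is Lindel\"of, and a locally compact Hausdorff space is regular; since a regular Lindel\"of space is paracompact, $X$ is locally compact and paracompact, whence it has the MOP by Lemma~\ref{lem6}. This countable case is the only point at which anything beyond substituting $2^{\aleph_0} = \aleph_1$ into Theorem~\ref{thm13new} is needed, and even it reduces to the standard fact that regular Lindel\"of spaces are paracompact together with Lemma~\ref{lem6}; so it is the sole (and quite minor) obstacle.
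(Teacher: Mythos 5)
Your proposal is correct and takes essentially the same route as the paper, which simply declares this corollary ``immediate'' from Theorem~\ref{thm13new} once CH identifies $\aleph_1$ with $2^{\aleph_0}$. Your separate treatment of the case $\abs{X} < \aleph_1$ (countable, hence Lindel\"of and regular, hence paracompact, hence MOP by Lemma~\ref{lem6}) carefully patches the cardinality hypothesis $\abs{X} \geq 2^{\aleph_0}$ that the paper's one-line justification silently glosses over --- a minor but genuine point of rigor in your favor.
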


\begin{cor}\label{corCH2}
CH implies if $X$ is locally compact, locally countable, and closed subspaces of size $\aleph_1$ are paracompact, then $X$ has the MOP.
\end{cor}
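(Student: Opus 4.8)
The plan is to obtain this as an immediate consequence of Corollary~\ref{corCH1} together with Lemma~\ref{lem6}. The bridge between the two hypotheses is the standard topological fact that a closed subspace of a locally compact space is again locally compact. So the whole argument reduces to converting the paracompactness hypothesis on closed subspaces of size $\aleph_1$ into the MOP hypothesis demanded by Corollary~\ref{corCH1}.

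Concretely, first I would fix a closed subspace $Y \subseteq X$ with $\abs{Y} = \aleph_1$. Since $X$ is locally compact and $Y$ is closed in $X$, the subspace $Y$ is locally compact. By the standing hypothesis, $Y$ is paracompact. Now I would invoke Lemma~\ref{lem6}, which says locally compact, paracompact spaces satisfy the MOP; hence $Y$ has the MOP. As $Y$ was an arbitrary closed subspace of size $\aleph_1$, this shows that under the hypotheses of the corollary, every closed subspace of $X$ of size $\aleph_1$ has the MOP.

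Finally I would appeal to Corollary~\ref{corCH1}: assuming CH, if $X$ is locally compact, locally countable, and all closed subspaces of size $\aleph_1$ have the MOP, then $X$ itself has the MOP. Since $X$ is locally compact and locally countable by assumption, and we have just verified the MOP for its closed subspaces of size $\aleph_1$, Corollary~\ref{corCH1} yields that $X$ has the MOP, as desired.

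There is essentially no obstacle here; the statement is a straightforward repackaging of the two quoted results, and the only point requiring any care is noting that closed subspaces inherit local compactness, so that Lemma~\ref{lem6} is applicable to each $Y$. The role of CH is entirely absorbed into the application of Corollary~\ref{corCH1}, so no additional set-theoretic input is needed beyond what that corollary already uses.
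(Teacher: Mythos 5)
Your proposal is correct and matches the paper's own argument exactly: the paper likewise notes that local compactness is closed-hereditary, applies Lemma~\ref{lem6} to get the MOP for the paracompact closed subspaces of size $\aleph_1$, and then invokes Corollary~\ref{corCH1}. No gaps; the only point of care you identify (closed subspaces inheriting local compactness) is precisely the point the paper highlights.
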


\begin{cor}\label{corCH3}
CH implies if $X$ is locally compact, locally countable, countable subsets have countable closures, and each closed $Y \subseteq X$ of size $\aleph_1$ satisfies the conditions for $X$ in Theorem \ref{thm12a}, then $X$ has the MOP.
\end{cor}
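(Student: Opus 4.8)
The plan is to obtain Corollary~\ref{corCH3} as an immediate combination of Theorem~\ref{thm12a} with the reflection principle already packaged in Corollary~\ref{corCH1}. Under CH we have $2^{\aleph_0}=\aleph_1$, so the family of ``closed subspaces of size $2^{\aleph_0}$'' governed by Theorem~\ref{thm13new} is precisely the family of closed subspaces of size $\aleph_1$ controlled in Corollary~\ref{corCH1}. Hence it suffices to verify the one hypothesis of Corollary~\ref{corCH1}, namely that every closed subspace of $X$ of size $\aleph_1$ has the MOP, and then to quote that corollary.

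To do this I would fix an arbitrary closed $Y \subseteq X$ with $\abs{Y}=\aleph_1$. By assumption $Y$ satisfies exactly the conditions imposed on $X$ in Theorem~\ref{thm12a}: it is locally compact and locally countable, countable subsets have countable closures, and it carries an increasing chain $Y=\bigcup_{\gamma<\omega_1}Y_\gamma$ of countable pieces, continuous at limits, with the required compact-neighbourhood condition $N(x)\cap Y_\alpha$ compact at boundary points of limit levels. Theorem~\ref{thm12a} applied to $Y$ then yields that $Y$ has the MOP. As $Y$ was arbitrary, every closed subspace of $X$ of size $\aleph_1$ has the MOP, and since $X$ is itself locally compact and locally countable, Corollary~\ref{corCH1} lets me conclude that $X$ has the MOP. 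The substantive content lives entirely upstream: the reflection step (that MOP for size-$\aleph_1$ closed subspaces propagates to all of $X$ under CH) is Theorem~\ref{thm13new}/Corollary~\ref{corCH1}, while the extraction of a discrete subcollection from a given moving off family is the argument of Theorem~\ref{thm12a}.

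The only points needing care, and the closest thing to an obstacle, are bookkeeping at the interface of the two results. I must confirm that the hypotheses of Theorem~\ref{thm12a} are literally what Corollary~\ref{corCH3} assumes of the closed subspaces (they are, by fiat, which is why the filtration and neighbourhood conditions are not inherited but stipulated), and I must take care not to overlook a cardinality degeneracy: if $\abs{X}<2^{\aleph_0}=\aleph_1$ then $X$ is countable and there are no closed subspaces of size $\aleph_1$ on which to run the argument, so I rely on Corollary~\ref{corCH1} (equivalently, on the $\abs{X}\ge 2^{\aleph_0}$ threshold of Theorem~\ref{thm13new}) to have already disposed of that base case rather than re-deriving the MOP for small locally compact spaces. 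Beyond this I expect no difficulty, the heavy lifting having been done in the two cited results.
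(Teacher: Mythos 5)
Your proposal is correct and is exactly the derivation the paper intends when it declares this corollary ``immediate'': Theorem~\ref{thm12a} gives the MOP for each closed subspace of size $\aleph_1$ (whose hypotheses hold by stipulation), and Corollary~\ref{corCH1} (i.e., Theorem~\ref{thm13new} under CH) then transfers the MOP to $X$. Your added care about the countable-$X$ degenerate case is a reasonable bookkeeping remark, but since Corollary~\ref{corCH1} is stated without a cardinality restriction, quoting it as you do is legitimate and matches the paper.
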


The first and third corollaries are immediate.  The second is because local compactness is closed-hereditary, and locally compact, paracompact spaces have the MOP.

\begin{proof}[Proof of Theorem \ref{thm13new}]
Let $M$ be a countably closed elementary submodel of size $2^{\aleph_0}$ containing the space $X$ and a moving off collection $\mc{A}$ for it.  By first countability, $X \cap M$ is a closed subspace of $X$, so it will suffice to find a discrete collection $\{A_n\}_{n < \omega}$ included in $\mc{A}$, with each $A_n \subseteq X \cap M$, and $\{A_n\}_{n < \omega}$ discrete in $X \cap M$.  It suffices to show $\mc{A} \cap M$ is moving off for $X \cap M$.   Let $F$ be a compact subspace of $X \cap M$.  Since compact sets are countable and $M$ is countably closed, $F \in M$.  Then, since $M \models \mc{A}$ is moving off, $M \models (\exists A \in \mc{A})(F \cap A = \emptyset)$.  But then there is an $A \in \mc{A} \cap M$ such that $F \cap A = \emptyset$.  
\end{proof}

Our previous counterexamples were not $\aleph_1$-collectionwise Hausdorff; now we can get one that satisfies that property:

\begin{itemize}
\item[f)]
A consistent-with-CH example of a locally compact, normal, $\aleph_1$-collectionwise Hausdorff space with the MOP which is not paracompact.
\end{itemize}

A ladder system space $X$ on a non-reflecting stationary set $E$ of $\omega$-cofinal ordinals in $\omega_2$ is easily seen to be $\aleph_1$-collectionwise Hausdorff, because initial segments of $E$ are non-stationary.  In fact, subspaces of size $\leq \aleph_1$ are paracompact, and hence such small closed ones have the MOP.  $X$ is not paracompact because it is not $\aleph_2$-collectionwise Hausdorff.  Shelah \cite{Shelah2} forced to make $X$ normal, consistent with CH. \hfill$\square$

\begin{itemize}
\item[g)]
A Souslin tree with the usual tree topology is collectionwise normal \cite{Fleissner1980}. It has countable extent but is not Lindel\"of, so is not paracompact. By Theorem \ref{thm12a} it has the MOP.
\end{itemize}

A similar proof of the MOP works for any other $\omega_1$-tree with the tree topology, but normal ones that are not paracompact will not be found in ZFC - see \cite{Fleissner1980}.  Gruenhage \cite{G} proved earlier that any Aronszajn tree
has the MOP.

\section{More results in a model of PFA$(S)[S]$}
There are some easy observations about the MOP in the model of PFA$(S)[S]$ we have mentioned earlier.

\begin{thm}\label{thm10}
In the model of Lemma \ref{lem1}, Theorem \ref{thm7}, etc., locally compact, hereditarily normal, countably tight spaces with the MOP are paracompact.
\end{thm}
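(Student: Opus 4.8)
The plan is to reduce the paracompactness of $X$ to the single structural obstruction isolated in Lemma \ref{lem1}. Since $X$ is locally compact and hereditarily normal, that lemma says $X$ is paracompact unless it includes a perfect pre-image of $\omega_1$. So I would argue by contradiction: assuming $X$ is not paracompact, fix a perfect pre-image $Y$ of $\omega_1$ sitting inside $X$ (which, as discussed below, may be taken to be a closed subspace), and then extract a contradiction from the MOP of $X$ together with Lemma \ref{lem4}.

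The argument rests on two facts, each needing only a short verification. First, the MOP is closed-hereditary: if $Y\subseteq X$ is closed and $\mathcal{K}$ is a moving off collection for $Y$, then for any compact $L\subseteq X$ the set $L\cap Y$ is compact in $Y$, so some $K\in\mathcal{K}$ misses $L\cap Y$, and since $K\subseteq Y$ it misses $L$; thus $\mathcal{K}$ moves off in $X$, and intersecting with $Y$ the discrete open expansion that the MOP of $X$ provides yields such an expansion in $Y$. Second, a perfect pre-image of $\omega_1$ is countably compact but not compact: non-compactness is clear, as the map is onto $\omega_1$, and for countable compactness I would take a countably infinite closed discrete $A\subseteq Y$, observe that $f\restriction A$ is finite-to-one because the fibres are compact, so $f(A)$ is an infinite closed subset of $\omega_1$, choose a limit point $z$ of $f(A)$, and apply the tube lemma for perfect maps to a suitable neighbourhood of the compact fibre $f^{-1}(z)$, forcing infinitely many points of $A$ into a set that can contain only finitely many of them.

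Granting these, the conclusion is immediate. The closed perfect pre-image $Y$ of $\omega_1$ inherits the MOP by the first fact and is countably compact and non-compact by the second; but Lemma \ref{lem4} asserts that a countably compact space with the MOP is compact. This contradiction shows that $X$ includes no perfect pre-image of $\omega_1$, whence Lemma \ref{lem1} delivers that $X$ is paracompact.

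The step I expect to need the most care, and where the hypotheses of hereditary normality and countable tightness are actually spent, is guaranteeing that the obstruction furnished by Lemma \ref{lem1} can be realised as a \emph{closed} subspace of $X$; closedness is exactly what makes the MOP descend in the first fact, and hence what lets Lemma \ref{lem4} bite. The countable-compactness verification via the tube lemma is the only genuinely topological computation, while the remainder is bookkeeping with the definitions and the quoted lemmas.
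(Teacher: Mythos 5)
Your overall strategy is exactly the paper's: use Lemma \ref{lem1} to reduce everything to showing $X$ contains no perfect pre-image of $\omega_1$, note that such a pre-image would be countably compact but not compact, and kill it with the closed-heredity of the MOP plus Lemma \ref{lem4}. Your verifications of the two supporting facts (the MOP is closed-hereditary; perfect pre-images of $\omega_1$ are countably compact and non-compact) are fine. But there is a genuine gap at precisely the point you flag: you never actually prove that the perfect pre-image $Y$ can be ``taken to be'' a closed subspace of $X$. You promise this is ``discussed below,'' but the later paragraph only repeats that closedness is the delicate step; no argument is supplied, and Lemma \ref{lem1} by itself gives no control over how the pre-image sits inside $X$.

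The missing fact --- and the one the paper's proof rests on --- is that in a countably tight space, countably compact subspaces are automatically closed (the paper cites \cite{EN} for this). So there is nothing to ``realise'' or ``choose'': any perfect pre-image of $\omega_1$ in $X$ is countably compact by your tube-lemma argument, hence closed because $X$ is countably tight. Note that this also corrects your accounting of the hypotheses: countable tightness is what buys closedness, while hereditary normality is spent entirely inside Lemma \ref{lem1} itself, as part of that lemma's hypothesis, not in the closedness step. With that one fact inserted (and the logical order reversed --- countable compactness of $Y$ comes first and then yields closedness, rather than closedness being arranged in advance), your argument is complete and coincides with the paper's proof.
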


\begin{proof}
In a countably tight space, countably compact subspaces are closed \cite{EN}.  Closed subspaces of a space satisfying the MOP also satisfy it.  Perfect pre-images of $\omega_1$ are countably compact but not compact.  Now apply Lemmas \ref{lem1} and \ref{lem4}.
\end{proof}

\begin{cor}\label{cor11}
In this model, first countable hereditarily normal spaces satisfying the MOP are paracompact.
\end{cor}

\begin{proof}
By Theorem \ref{thm10} and Lemma \ref{lem5}.
\end{proof}

\begin{thm}\label{thm12}
In this model, locally compact, normal, countably tight spaces with Lindel\"of number $\leq \aleph_1$ satisfying the MOP are paracompact.
\end{thm}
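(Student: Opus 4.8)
The plan is to run exactly the argument of Theorem \ref{thm10}, but feeding its conclusion into Lemma \ref{lem3} rather than Lemma \ref{lem1}. Recall that Lemma \ref{lem3} asserts, in this model, that a locally compact normal space with Lindel\"of number $\leq \aleph_1$ which does not include a perfect pre-image of $\omega_1$ is paracompact. Since $X$ is assumed locally compact, normal, and of Lindel\"of number $\leq \aleph_1$, it therefore suffices to verify the single obstruction-eliminating hypothesis: that $X$ includes no closed subspace that is a perfect pre-image of $\omega_1$.

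To establish that, I would argue by contradiction exactly as in Theorem \ref{thm10}. Suppose $Y \subseteq X$ is a perfect pre-image of $\omega_1$. Then $Y$ is countably compact (perfect pre-images of countably compact spaces are countably compact) but not compact (its continuous image $\omega_1$ is not compact). Countable tightness of $X$ now enters decisively: by \cite{EN}, countably compact subspaces of a countably tight space are closed, so $Y$ is closed in $X$. Since the MOP is closed-hereditary, $Y$ inherits the MOP from $X$. But then $Y$ is a countably compact space with the MOP, so by Lemma \ref{lem4} it is compact, contradicting non-compactness. Hence no such $Y$ exists, and Lemma \ref{lem3} delivers paracompactness.

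I do not expect a genuine obstacle here; the content is in correctly matching hypotheses to the right paracompactness criterion. The only substantive difference from Theorem \ref{thm10} is that hereditary normality (used there to invoke Lemma \ref{lem1}) is replaced by plain normality together with the Lindel\"of number bound $\leq \aleph_1$, which is precisely what Lemma \ref{lem3} requires. The countable tightness hypothesis is used in exactly one place, namely to promote the countably compact perfect pre-image to a closed subspace so that the closed-hereditary MOP argument applies; without it the reduction to Lemma \ref{lem4} would fail. Thus the proof is a short two-line invocation: the countable-tightness/MOP machinery of Theorem \ref{thm10} rules out perfect pre-images of $\omega_1$, and Lemma \ref{lem3} finishes.
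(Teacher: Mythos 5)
Your proposal is correct and is essentially the paper's own proof: the paper's one-line argument ("they do not include a perfect pre-image of $\omega_1$, so we can apply Lemma \ref{lem3}") implicitly relies on exactly the countable-tightness/closed-hereditary-MOP/Lemma \ref{lem4} reasoning from Theorem \ref{thm10} that you spell out in detail. You have simply made explicit what the paper leaves tacit.
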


\begin{proof}
They do not include a perfect pre-image of $\omega_1$, so we can apply Lemma \ref{lem3}.
\end{proof}

\begin{cor}\label{cor13}
In this model, first countable, normal spaces with Lindel\"of number $\leq \aleph_1$ satisfying the MOP are paracompact.
\end{cor}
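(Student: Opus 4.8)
The plan is to deduce this corollary from Theorem \ref{thm12} in exactly the way Corollary \ref{cor11} is deduced from Theorem \ref{thm10}. Let $X$ be first countable and normal, with Lindel\"of number $\leq \aleph_1$, and suppose $X$ satisfies the MOP. Theorem \ref{thm12} will yield paracompactness once I have supplied its two additional hypotheses, local compactness and countable tightness, which are not literally assumed in the statement of the corollary.

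First I would apply Lemma \ref{lem5}: a first countable space satisfying the MOP is locally compact, so $X$ is locally compact. This is the one genuine input, converting the hypothesis ``first countable'' into the ``locally compact'' needed downstream. Countable tightness then comes for free, since every first countable space is countably tight (indeed Fr\'echet--Urysohn). At this point $X$ is seen to be locally compact, normal, countably tight, of Lindel\"of number $\leq \aleph_1$, and to satisfy the MOP, so all the hypotheses of Theorem \ref{thm12} are in place.

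Finally I would invoke Theorem \ref{thm12} on $X$ to conclude that it is paracompact. I do not anticipate any real obstacle: the whole content is the bookkeeping of verifying the hypotheses of Theorem \ref{thm12}, with Lemma \ref{lem5} furnishing local compactness and first countability furnishing countable tightness by the standard implication. In short, the proof is ``By Theorem \ref{thm12} and Lemma \ref{lem5},'' parallel to the proof of Corollary \ref{cor11}.
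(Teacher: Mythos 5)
Your proposal is correct and matches the paper's own proof, which is simply ``Apply Lemma \ref{lem5} and Theorem \ref{thm12}.'' You have just spelled out the implicit bookkeeping (first countable $\Rightarrow$ countably tight, and Lemma \ref{lem5} supplying local compactness), exactly as the paper intends.
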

\begin{proof}
Apply Lemma \ref{lem5} and Theorem \ref{thm12}.
\end{proof}

\begin{cor}
In this model, locally compact, normal, countably tight spaces
satisfying the MOP (in particular, first countable normal spaces satisfying the MOP) are paracompact, provided countable sets have Lindel\"of closures.
\end{cor}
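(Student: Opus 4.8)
The plan is to reduce the statement to Theorem \ref{thm12} by exhibiting $X$ as a topological sum of clopen subspaces of Lindel\"of number $\leq\aleph_1$. First I would dispose of the parenthetical case: a first countable normal space with the MOP is locally compact by Lemma \ref{lem5} and is countably tight because it is first countable, so it is genuinely a special case. Next, exactly as in the proof of Theorem \ref{thm10}, countable tightness together with the MOP rules out a perfect pre-image of $\omega_1$: such a pre-image would be a countably compact, non-compact closed subspace (countably compact subspaces of a countably tight space are closed, by \cite{EN}), it would inherit the MOP, and then Lemma \ref{lem4} would force it to be compact. Consequently every closed subspace produced below again omits a perfect pre-image of $\omega_1$ and inherits local compactness, normality, countable tightness, the MOP, and the Lindel\"of-closure hypothesis.

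The core is the decomposition. Given a point $x$, I would build an increasing continuous chain $\langle W_\alpha : \alpha<\omega_1\rangle$ of $\sigma$-compact open sets with $x\in W_0$ and $\overline{W_\alpha}\subseteq W_{\alpha+1}$, taking unions at countable limits. Set $V=\bigcup_{\alpha<\omega_1}W_\alpha$. Countable tightness makes $V$ clopen: if $p\in\overline V$ then $p\in\overline D$ for some countable $D\subseteq V$, and since the $W_\alpha$ increase, $D\subseteq W_{\alpha^\ast}$ where $\alpha^\ast$ is the supremum of the countably many indices met by $D$, whence $p\in\overline{W_{\alpha^\ast}}\subseteq W_{\alpha^\ast+1}\subseteq V$. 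Moreover $V$ has Lindel\"of number $\leq\aleph_1$, being covered by the $\aleph_1$ many Lindel\"of sets $W_\alpha$. One then recurses on the clopen set $X\smallsetminus V$, which satisfies all the hypotheses, aiming to partition $X$ into clopen pieces of Lindel\"of number $\leq\aleph_1$. Each such piece is paracompact by Theorem \ref{thm12}, and a topological sum of paracompact spaces is paracompact.

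The main obstacle is making this construction run, and it is precisely here that the closure hypothesis must be used. Passing a countable limit stage $\lambda$ requires $\overline{W_\lambda}$ — the closure of a $\sigma$-compact set — to be Lindel\"of, so that it can be engulfed by a $\sigma$-compact open set; and the outer recursion must avoid stranding points at limit stages (the phenomenon whereby splitting $\omega+1$ into singletons leaves $\omega$ behind). Neither is a consequence of the bare topological hypotheses: there is a locally compact, countably tight space in which countable sets even have countable closures, yet some $\sigma$-compact open set has non-Lindel\"of closure — take $\omega_1$ disjoint convergent sequences crossed by $\omega$ compact ``rows,'' so that the closure of the union of the rows absorbs all $\omega_1$ sequence-limits. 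That space fails to be normal, its sequence-limits and its row-limits being inseparable, and I expect this to be the point of the hypotheses: normality (entering through Lemma \ref{lem3} and Theorem \ref{thm12}) together with the MOP should force the closure of every $\sigma$-compact set to be Lindel\"of, equivalently should let one \emph{steer} the chain so that each limit stage absorbs only countably many new limit points. Proving that ``$\sigma$-compact sets have Lindel\"of closures'' from normality, countable tightness and the closure hypothesis is the step I expect to be hard; granting it, the chain runs, the clopen pieces of Lindel\"of number $\leq\aleph_1$ are produced and assembled, and Theorem \ref{thm12} completes the argument.
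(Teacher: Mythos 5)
Your opening moves are right and match the paper's implicit reasoning: Lemma \ref{lem5} plus first countability disposes of the parenthetical case, and the Theorem \ref{thm10} argument (countably compact subspaces of countably tight spaces are closed, closed subspaces inherit the MOP, then Lemma \ref{lem4}) shows the space includes no perfect pre-image of $\omega_1$. But the rest of your argument has a genuine gap, and you have identified it yourself without closing it. Your reduction to Theorem \ref{thm12} needs, around each point, an $\omega_1$-chain of $\sigma$-compact open sets with $\overline{W_\alpha}\subseteq W_{\alpha+1}$, and to continue past a countable limit stage $\lambda$ you need $\overline{W_\lambda}$ --- the closure of a $\sigma$-compact set --- to be Lindel\"of. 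The hypothesis only gives Lindel\"of closures of \emph{countable} sets, and this does not transfer: compact countably tight spaces need not be separable (the Alexandrov duplicate of $[0,1]$ is compact, first countable, and not separable), so you cannot replace a $\sigma$-compact set by a countable dense subset and invoke the hypothesis. Your proposed repair --- that normality plus the MOP should force $\sigma$-compact sets to have Lindel\"of closures --- is precisely the step you never prove, and it carries essentially the full weight of the corollary. The outer recursion also has its own unresolved limit-stage problem (your ``stranding'' issue): an increasing union of clopen pieces need not be closed, so the transfinite partition into clopen pieces of Lindel\"of number $\leq\aleph_1$ is not established either.

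The paper does not attempt any such reduction. Its proof is a one-liner: it cites a further lemma from \cite{Tallb}, namely that in this model locally compact \emph{normal} spaces not including a perfect pre-image of $\omega_1$ are paracompact, provided countable sets have Lindel\"of closures --- with no restriction on Lindel\"of number at all. Given that lemma, the corollary follows immediately from the ``no perfect pre-image of $\omega_1$'' argument you already gave correctly. In other words, the heavy lifting you tried to do by decomposition is done inside that cited PFA$(S)[S]$ result; Lemma \ref{lem3} and Theorem \ref{thm12}, the $\aleph_1$-Lindel\"of versions, are the wrong tools for the unrestricted statement, precisely because of the closure obstacle you ran into.
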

\begin{proof}
In \cite{Tallb} it is shown that in this model,
\begin{lem}
In this model, locally compact normal spaces not including a perfect pre-image of $\omega_1$
are paracompact, provided countable sets have Lindel\"of closures.\hfill$\square$
\end{lem}
\renewcommand{\qedsymbol}{}
\end{proof}

\section{Baire powers of function spaces}
\begin{defn}
A space is \emph{weakly $\alpha$-favorable} \cite{C} if Nonempty has a winning strategy in the Banach-Mazur game.  In that game, players take turns picking an open set included in their opponent's pick. The first player, Empty, wins if, after $\omega$ plays, the intersection of the open sets is empty; otherwise the second player, Nonempty, wins.
\end{defn} 

\begin{lem}[{~\cite{Ma}}]\label{lemAlphaFavour}
A locally compact $X$ is paracompact if and only if $C_k(X)$ is weakly $\alpha$-favorable.
\end{lem}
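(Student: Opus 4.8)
The plan is to prove the two implications by complementary methods, in each case translating the statement about $C_k(X)$ into a property of the compacta of $X$. Two tools drive everything. The first is the classical structure theorem (Dieudonn\'e): a locally compact Hausdorff space is paracompact if and only if it is a topological direct sum $X=\bigoplus_{i\in I}X_i$ of $\sigma$-compact (equivalently Lindel\"of) clopen subspaces. The second is Gruenhage's translation \cite{Gruenhage} of the Banach--Mazur game on $C_k(X)$ into a \emph{moving off game} on $X$: Empty plays a compact $L_n$, Nonempty answers with a compact $K_n$ disjoint from $L_n$, and Nonempty wins the run if and only if $\{K_n\}_{n<\omega}$ has a discrete open expansion. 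In this translation Nonempty wins the moving off game on $X$ exactly when Nonempty wins the Banach--Mazur game on $C_k(X)$, i.e. exactly when $C_k(X)$ is weakly $\alpha$-favorable.

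For the forward direction (paracompact $\Rightarrow$ weakly $\alpha$-favorable) I would bypass the game and argue directly. Writing $X=\bigoplus_{i\in I}X_i$ as above and using that a compact subset of a free sum meets only finitely many summands, we get $C_k(X)\cong\prod_{i\in I}C_k(X_i)$. Each $X_i$ is locally compact and $\sigma$-compact, hence hemicompact, so $C_k(X_i)$ is metrizable; since $X_i$ is locally compact it is a $k$-space, whence $C_k(X_i)$ is complete, that is, completely metrizable. A completely metrizable space is weakly $\alpha$-favorable (Nonempty shrinks diameters to $0$ inside closures and appeals to completeness), and weak $\alpha$-favorability is preserved under arbitrary Tychonoff products (Nonempty runs the factor strategies in parallel on the finitely many coordinates restricted so far, advancing each coordinate cofinally often). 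Hence $C_k(X)\cong\prod_i C_k(X_i)$ is weakly $\alpha$-favorable.

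For the reverse direction I would argue the contrapositive through Gruenhage's correspondence: assuming $X$ is locally compact but not paracompact, I exhibit a winning strategy for Empty in the moving off game, which, since the two players cannot both possess winning strategies, shows Nonempty has none, so $C_k(X)$ is not weakly $\alpha$-favorable. By the structure theorem, failure of paracompactness produces a clopen subspace $C\subseteq X$ that is locally compact yet not $\sigma$-compact, although every point of $C$ still has a relatively compact clopen neighborhood --- the prototype being a copy of $\omega_1$, or a ladder system space over a stationary subset of $\omega_1$ as in examples b)--c) above. Empty plays entirely inside $C$: reading Nonempty's answer $K_n$ (a compact, hence `bounded', subset of $C$), Empty plays the next $L_{n+1}$ so as to block everything already reached and to force the subsequent $K_{n+1}$ strictly `higher' in a fixed directed exhaustion of $C$. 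Because $C$ is not $\sigma$-compact this upward climb cannot be cofinal, and a stationarity/countable-compactness argument locates a limit point of $C$ at which all the $K_n$ accumulate; hence $\{K_n\}_{n<\omega}$ has no discrete open expansion, defeating the arbitrary strategy.

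The main obstacle is precisely this construction of Empty's strategy, and the difficulty is conceptual as much as technical. Non-paracompactness by itself yields no single moving off collection lacking an infinite discrete subcollection --- all of the counterexamples b)--g) do satisfy the MOP, so their function spaces $C_k(X)$ are Baire --- so Empty cannot win by any one-shot move. The entire argument turns on adaptivity, Empty reading each $K_n$ before committing $L_{n+1}$ and steering the play into the accumulation trap; this is exactly the gap between ``Empty has no winning strategy'' (Baireness of $C_k(X)$, equivalently the MOP) and ``Nonempty has a winning strategy'' (weak $\alpha$-favorability, equivalently paracompactness). Converting the informal upward-climb into a rigorous strategy against a completely arbitrary Nonempty strategy, and proving that the forced accumulation point genuinely lies in $X$, is where the real work resides; this whole circle of ideas is due to Ma \cite{Ma}.
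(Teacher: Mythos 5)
Your forward direction is essentially correct and complete: Dieudonn\'e's decomposition of a locally compact paracompact space into clopen $\sigma$-compact summands, the homeomorphism $C_k(\bigoplus_i X_i)\cong\prod_i C_k(X_i)$, complete metrizability of $C_k$ of a hemicompact $k$-space, Choquet's observation that completely metrizable spaces are weakly $\alpha$-favorable, and the productivity of weak $\alpha$-favorability all hold and assemble into a valid proof that paracompact implies weakly $\alpha$-favorable. (Note that the paper itself states this lemma as a quotation from \cite{Ma} and gives no proof, so the evaluation here is on the merits of your argument alone.)

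The reverse direction, however, rests on a premise that is provably false. You propose, assuming $X$ is locally compact and not paracompact, to \emph{exhibit a winning strategy for Empty} in the moving off game and then conclude that Nonempty has none. But Empty possessing a winning strategy is strictly stronger than Nonempty lacking one: by the Banach--Mazur/Oxtoby theorem, Empty has no winning strategy exactly when the space in question is Baire, and the paper's own example b) --- a ladder system space on a stationary subset of $\omega_1$ --- is, in ZFC, locally compact and not paracompact yet has the MOP, so by Gruenhage's theorem $C_k(X)$ \emph{is} Baire and Empty has no winning strategy there, adaptive or otherwise. Adaptivity cannot rescue the plan, because adaptivity is already built into the notion of a strategy; the game on such spaces is simply undetermined, and that undetermined region (Baire but not weakly $\alpha$-favorable, equivalently MOP but not paracompact) is precisely where this whole paper lives. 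You half-notice the obstruction when you remark that all of the counterexamples b)--g) satisfy the MOP, but you draw the wrong conclusion from it. The correct argument must instead take an \emph{arbitrary} strategy $\sigma$ for Nonempty and construct, using $\sigma$ itself, a single run consistent with $\sigma$ whose compact sets $K_n$ have no discrete open expansion; the defeating run depends on $\sigma$, and no single Empty strategy defeats all $\sigma$ simultaneously. A secondary weakness: your structural reduction ``not paracompact yields a clopen non-$\sigma$-compact subspace'' is too coarse --- it is satisfied trivially by $X$ itself, and also by an uncountable discrete space, which is paracompact, so non-$\sigma$-compactness plus a ``directed exhaustion'' cannot be the engine of the argument. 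What non-paracompactness of a locally compact space actually yields, via the chain-component decomposition underlying Dieudonn\'e's theorem, is a clopen, chain-connected, non-Lindel\"of subspace, and it is against that structure, with a closing-off/pressing-down argument applied to the given strategy $\sigma$, that the proof in \cite{Ma} proceeds.
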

Galvin and Scheepers \cite{GS} note that White \cite{W} showed that all box powers of weakly $\alpha$-favorable spaces are Baire, and then prove:

\begin{thm}\label{thmGS}
If it is consistent there is a proper class of measurable cardinals, then it is consistent that if all box powers of a space are Baire, then the space is weakly $\alpha$-favorable.
\end{thm}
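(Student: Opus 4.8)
The plan is to translate both sides of the desired implication into statements about the Banach--Mazur game $BM(X)$ and then to close the gap between them using determinacy supplied by the large cardinals. By the classical Oxtoby characterization, $X$ is Baire exactly when Empty has no winning strategy in $BM(X)$, while by the definition above $X$ is weakly $\alpha$-favorable exactly when Nonempty has a winning strategy. Hence on any space for which $BM(X)$ happens to be \emph{determined}, Baireness and weak $\alpha$-favorability coincide. Since White's theorem already gives that weak $\alpha$-favorability implies all box powers are Baire, and since $X$ is itself (trivially) among its box powers, the hypothesis ``all box powers of $X$ are Baire'' in particular makes $X$ Baire; so on a determined game the conclusion is immediate and one obtains precisely the converse of White's implication.

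First I would fix the coding that exhibits $BM(X)$ as a game with an analytic payoff. Fixing a base $\mc B$ for $X$ of size $\kappa = w(X)$, a legal run is a nested sequence of basic open sets, coded as an element of $\kappa^\omega$; the nesting requirement is a closed condition, and Nonempty's winning condition $\bigcap_n U_n \neq \emptyset$ reads ``$\exists x\,\forall n\,(x \in U_n)$'', i.e.\ the projection of a closed set, and so is $\boldsymbol{\Sigma}^1_1$ over $\kappa^\omega$. When $X$ is second countable this is an ordinary analytic game on $\omega^\omega$, so a single measurable cardinal already forces it to be determined, by Martin's theorem that a measurable implies analytic determinacy.

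The substantive work is to secure determinacy when $w(X)$ is uncountable, and this is where the proper class of measurables and the box-power hypothesis earn their keep. The route I would pursue first is direct: run Martin's homogeneous-tree/unravelling construction for $\boldsymbol{\Sigma}^1_1$ games on $\kappa^\omega$ using a measurable cardinal chosen above $\kappa$. A proper class of measurables guarantees such a cardinal exists no matter how large $w(X)$ is, so $BM(X)$ is determined for \emph{every} space in the model, and the theorem follows for all $X$ uniformly. A fallback route, if one prefers to lean only on classical $\omega^\omega$-determinacy, is to argue by contradiction: assuming $X$ is not weakly $\alpha$-favorable, reflect the failure of Nonempty's strategies into a second countable subspace, and use the box-power assumption to transport ``all powers Baire'' down to that subspace, where the measurable-driven analytic determinacy produces a winning strategy for Empty and hence exhibits a box power $X^{\square\kappa}$ that is not Baire.

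I expect the main obstacle to be exactly the passage from $\omega^\omega$ to $\kappa^\omega$. Martin's argument is tailored to games on $\omega$, so for an uncountable move-set one must either verify that the homogeneous tree built from the measurable still unravels the specific projection-of-closed payoff identified above, or else make the reflection genuinely sound -- the delicate point there being that a second countable subspace inheriting non-weak-$\alpha$-favorability need not obviously inherit ``all box powers Baire,'' so the box topology on the reflected powers has to be controlled with care. Once either form of the determinacy step is in hand, the consistency statement is immediate: the entire argument takes place in a model possessing a proper class of measurable cardinals, so $\mathrm{Con}$(proper class of measurables) yields $\mathrm{Con}$ of the implication.
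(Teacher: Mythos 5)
Your argument never actually uses the box-power hypothesis except to conclude that $X$ itself is Baire, so if it were correct it would prove the much stronger statement ``consistently, every Baire space is weakly $\alpha$-favorable.'' That statement is refutable in ZFC, not merely unprovable: a Bernstein set $B \subseteq \mathbb{R}$ is a Baire space but is not weakly $\alpha$-favorable, and Bernstein sets exist in every model of ZFC. (The paper invokes exactly this example to explain why ordinary Tychonoff powers cannot replace box powers in the theorem: by Oxtoby, \emph{all} Tychonoff powers of $B$ are Baire, yet $B$ is not weakly $\alpha$-favorable.) Equivalently, the Banach--Mazur game on $B$ is undetermined outright in ZFC --- neither player has a winning strategy --- so no large-cardinal hypothesis can make $BM(X)$ determined for all second countable $X$, contradicting your central claim.

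The error in the determinacy step is the assertion that the payoff of $BM(X)$ is $\boldsymbol{\Sigma}^1_1$. The winning condition is $\exists x\,(x \in X \wedge \forall n\, x \in U_n)$, and the witness $x$ ranges over the space $X$, not over a Polish space; the payoff is a projection of a closed set intersected with a copy of the \emph{arbitrary} set $X$, hence analytic only when $X$ itself is Souslin/analytic. A Bernstein set is second countable but maximally non-analytic, which is precisely why Martin's theorem does not apply there and the game is undetermined. The same objection defeats your fallback route, since the reflected subspace is again only second countable, not analytic. Note also that the paper does not prove this theorem; it quotes it from Galvin and Scheepers, whose actual argument must (and does) use the box powers essentially: from the assumption that Nonempty has no winning strategy in $BM(X)$, one constructs a winning strategy for Empty in $BM$ of a sufficiently large box power $X^{\square\kappa}$ --- Empty uses the many coordinates to diagonalize against all potential behaviors of Nonempty simultaneously, and the partition-type combinatorics needed for this diagonalization at arbitrary cardinals is what collapsing a proper class of measurable cardinals supplies. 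Determinacy of a definable game plays no role, and no repair of your outline can avoid this, since any proof that uses only ``$X$ is Baire'' is blocked by the Bernstein set.
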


They then ask whether there are any consistent counterexamples.  Let us consider the particular case of $C_k(X)$ for $X$ locally compact.  Their result then entails:

\begin{cor}
If it is consistent there is a proper class of measurable cardinals, then it is consistent that if all box powers of $C_k(X)$ are Baire, where $X$ is locally compact, then $X$ is paracompact.
\end{cor}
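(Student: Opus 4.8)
The plan is to chain Theorem~\ref{thmGS} together with Lemma~\ref{lemAlphaFavour}, applying the former to the particular space $C_k(X)$. The key observation that makes this a genuine corollary is that Lemma~\ref{lemAlphaFavour} is a theorem of ZFC, so it is available in whatever model Theorem~\ref{thmGS} supplies; no independent forcing argument is needed, and all the substantive work has already been carried out in the cited results of Galvin--Scheepers and Ma.

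First I would fix the consistency setup. Assuming the consistency of a proper class of measurable cardinals, Theorem~\ref{thmGS} hands us a model $V$ of ZFC in which the implication ``all box powers of a space are Baire, therefore that space is weakly $\alpha$-favorable'' holds for every space. I would then work entirely inside $V$. Let $X$ be any locally compact space all of whose function-space box powers are Baire. Applying Theorem~\ref{thmGS} to the single space $C_k(X)$ yields that $C_k(X)$ is weakly $\alpha$-favorable. Since $X$ is locally compact, Lemma~\ref{lemAlphaFavour} asserts the equivalence of paracompactness of $X$ with weak $\alpha$-favorability of $C_k(X)$; reading this in the direction we need, we conclude that $X$ is paracompact. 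As $X$ was an arbitrary locally compact space in $V$ with all box powers of its function space Baire, the desired implication holds in $V$, which establishes its consistency.

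The only point requiring care --- and the nearest thing to an obstacle --- is purely bookkeeping: one must verify that Theorem~\ref{thmGS} is being invoked with $C_k(X)$ playing the role of ``the space'' rather than $X$ itself, and that the universal quantifier over spaces in the statement of Theorem~\ref{thmGS} indeed covers $C_k(X)$ in the model $V$. There is no genuine mathematical difficulty, because the deep content --- the large-cardinal consistency argument underlying Theorem~\ref{thmGS} and the game-theoretic characterization of paracompactness in Lemma~\ref{lemAlphaFavour} --- lies entirely in the cited work.

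Finally, one might remark for completeness that White's direction, namely that all box powers of a weakly $\alpha$-favorable space are Baire, together with Lemma~\ref{lemAlphaFavour} shows the converse implication holds outright in ZFC: if $X$ is locally compact and paracompact, then $C_k(X)$ is weakly $\alpha$-favorable and hence all its box powers are Baire. Thus in this model the two conditions on $C_k(X)$ are in fact equivalent, although only the stated direction is required for the corollary.
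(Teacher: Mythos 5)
Your proof is correct and is exactly the argument the paper intends (the paper leaves it implicit, saying only that the Galvin--Scheepers result ``entails'' the corollary): apply Theorem~\ref{thmGS} in its model to the space $C_k(X)$, then use the ZFC equivalence of Lemma~\ref{lemAlphaFavour} to pass from weak $\alpha$-favorability of $C_k(X)$ to paracompactness of $X$. No further comment is needed.
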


Scheepers pointed out to me that Oxtoby \cite{Ox} proved that any product of Baire spaces with a countable base is Baire, but that a Bernstein set of reals is Baire but not weakly $\alpha$-favorable, so in the Theorem, ordinary powers are not enough.  

In fact, they are not even sufficient for the Corollary.  Example b) is a counterexample:

\begin{thm}
Suppose $X$ satisfies the hypotheses of Theorem 12.  Then arbitrary powers of $C_k(X)$ are Baire.
\end{thm}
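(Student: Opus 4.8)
The plan is to translate the statement about powers of $C_k(X)$ into a statement about the Moving Off Property for topological sums, and then to prove the latter directly. First I would use the standard homeomorphism $C_k\!\left(\bigoplus_{\xi<\kappa} X_\xi\right)\cong \prod_{\xi<\kappa} C_k(X_\xi)$, valid because every compact subset of a topological sum lies in a finite subsum, so that the compact-open topology on the left matches the Tychonoff product topology on the right. Taking each $X_\xi$ to be a copy of $X$, this identifies the $\kappa$-th power $C_k(X)^\kappa$ with $C_k(Y)$, where $Y=\bigoplus_{\xi<\kappa}X$ is locally compact. By the characterization of Baireness in \cite{Gruenhage} (the opening Theorem), it therefore suffices to prove that $Y$ has the MOP for every cardinal $\kappa$; and by the reformulation of the MOP for locally compact spaces in \cite{G}, I need only extract from an arbitrary moving off collection $\mc{A}$ for $Y$ an infinite discrete subcollection.

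Next I would record a structural fact: a topological sum of at most $\aleph_1$ copies of a space satisfying the hypotheses of Theorem \ref{thm12a} again satisfies those hypotheses. Local compactness, local countability, and the countable-closures condition are clearly preserved, and a suitable filtration is obtained diagonally, setting $Y_\gamma=\bigoplus_{\xi<\gamma}X^\gamma_\xi$, where $\{X^\gamma_\xi\}_{\gamma<\omega_1}$ is the given filtration of the $\xi$-th copy; continuity at limits and the boundary/compact-neighborhood condition transfer coordinatewise. Hence, by Theorem \ref{thm12a}, every such sum has the MOP, strengthening Corollary \ref{cor13a}. In particular the conclusion is immediate when $\kappa\le\aleph_1$, and the remaining work is to reduce arbitrary $\kappa$ to a subsum of size at most $\aleph_1$.

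For the reduction I would analyze the supports $\op{supp}(A)=\set{\xi:A\cap X_\xi\neq\emptyset}$, each a finite subset of $\kappa$ since $A$ is compact. Greedily choosing members with pairwise disjoint supports, either the process never terminates---producing infinitely many $A_n$ with pairwise disjoint supports, which are automatically discrete in $Y$ because each clopen summand meets at most one of them---or it halts after finitely many steps, yielding a finite $T\subseteq\kappa$ meeting $\op{supp}(A)$ for every $A\in\mc{A}$. In the first case we are done. In the second, every member of $\mc{A}$ meets the finite subsum $\bigoplus_{\xi\in T}X$, and I would perform a closing-off of length $\omega_1$: a size-$\le\aleph_1$ subsum carries a base of at most $\aleph_1$ compact open sets, and for each such basic set $\mc{A}$ supplies a member disjoint from it whose finite support I adjoin to the index set. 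Iterating and taking unions produces $J^*\supseteq T$ with $\abs{J^*}\le\aleph_1$ such that $\mc{A}^*=\set{A\in\mc{A}:A\subseteq Y^*}$, where $Y^*=\bigoplus_{\xi\in J^*}X$, is moving off for $Y^*$. Since $Y^*$ has size $\le\aleph_1$ it satisfies the hypotheses of Theorem \ref{thm12a}, so $\mc{A}^*$ contains an infinite subcollection discrete in $Y^*$; as its members lie in the clopen set $Y^*$, this subcollection is discrete in $Y$.

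The main obstacle is this closing-off. The difficulty is that the Moving Off Property only lets one avoid compact sets, whereas the members of $\mc{A}$ protrude from the finite base into the possibly enormous complement, and one cannot avoid whole summands. What makes the bookkeeping close is precisely the structural fact above: a size-$\le\aleph_1$ subsum has a base of only $\aleph_1$ compact open sets, so at each of the $\omega_1$ stages one adjoins only finitely many new summands per basic set and the index set never grows past $\aleph_1$. I would need to verify carefully that avoiding this $\aleph_1$-sized base---together with its finite unions---suffices to run the argument of Theorem \ref{thm12a} inside $Y^*$, in particular that the compact neighborhoods $N(x)$ used there, for $x\in Y^*$, already lie within finitely many summands of $J^*$. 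Finally, I would check that the whole argument is carried out in ZFC, so that it applies in the model of Example b), where $X$ is locally compact and not paracompact, thereby furnishing the promised counterexample to the ordinary-powers form of the Corollary.
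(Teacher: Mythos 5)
Your argument is correct, but it takes a genuinely different route from the paper's, which is much shorter: the paper needs only the \emph{countable} sum. By Corollary \ref{cor13a} the sum of countably many copies of $X$ satisfies the hypotheses of Theorem \ref{thm12a} and so has the MOP, whence by the McCoy--Ntantu homeomorphism and Gruenhage's characterization $\left(C_k(X)\right)^{\omega}$ is Baire; the paper then quotes a theorem of Fleissner and Kunen \cite{FK} --- if $Z^{\omega}$ is Baire then $Z^{\kappa}$ is Baire for every $\kappa \geq \omega$ --- to get all powers at once. You instead prove the stronger topological statement directly: every sum $\bigoplus_{\xi < \kappa} X$ has the MOP, via (i) extending Corollary \ref{cor13a} to sums of $\aleph_1$ copies using the diagonal filtration $Y_\gamma = \bigoplus_{\xi < \gamma} X^{\gamma}_{\xi}$, and (ii) a support analysis plus closing-off that reduces an arbitrary moving off collection to one on an $\aleph_1$-sized clopen subsum. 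The verifications you flag as outstanding do go through: compact subsets of a sum meet only finitely many summands, so each $N(x)$ automatically has finite support (indeed may be taken inside the single clopen summand containing $x$); closing off against the $\aleph_1$-sized base and its finite unions suffices because every compact subset of $Y^*$ is covered by such a finite union appearing at some stage; and everything is ZFC (your use of the base rather than all compact sets is exactly what keeps the index set at $\aleph_1$ without CH --- note your disjoint-supports dichotomy is not even needed, as the closing-off alone suffices). As for what each approach buys: the paper's proof is a few lines, pushing the work into the quoted Baire-category lemma, whereas yours avoids Fleissner--Kunen entirely and establishes by purely topological means the paper's later theorem that if countable sums of copies of a locally compact $X$ have the MOP then arbitrary sums do --- a fact the paper itself obtains only by the detour through $C_k$. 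Your closing-off is in effect a hands-on, CH-free analogue of the reflection argument of Theorem \ref{thm13new}, made possible here because supports in a topological sum are finite.
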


\begin{proof}
Fleissner and Kunen \cite{FK} prove

\begin{lem}
Let $\kappa \geq \omega$.  If $X^{\omega}$ is Baire, then $X^{\kappa}$ is Baire.
\end{lem}

McCoy and Ntantu \cite{MN} prove

\begin{lem}
Let $\bigoplus_{\alpha < \lambda}X_\alpha$ be the topological sum of copies of $X$.  Then $C_k\left(\bigoplus_{\alpha < \lambda}X_\alpha\right)$ is homeomorphic to $\left(C_k(X)\right)^{\lambda}$.
\end{lem}

Thus, by Corollary \ref{cor13a}, our assertion that b) is a counterexample is verified.
\end{proof}

The preceding two lemmas prove that:
\begin{thm}
If countable sums of copies of a locally compact $X$ have the MOP, then
arbitrary sums of copies of $X$ have the MOP.
\end{thm}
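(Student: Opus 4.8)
The plan is to run the same argument that established the preceding theorem, but to route everything through Gruenhage's equivalence (Theorem 1) between the MOP for a locally compact space and Baireness of its $C_k$. The first thing I would record is that an arbitrary topological sum of locally compact spaces is again locally compact: each summand is clopen in the sum, so a compact neighborhood of a point inside its own summand remains a compact neighborhood in the whole sum. Consequently, for every index set $\lambda$ the sum $\bigoplus_{\alpha < \lambda} X$ is locally compact, and Theorem 1 is available for it in both directions.

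With that in hand, I would translate the hypothesis into a statement about function spaces. Since $\bigoplus_{n < \omega} X$ is locally compact and, by hypothesis, has the MOP, Theorem 1 gives that $C_k\!\left(\bigoplus_{n < \omega} X\right)$ is Baire; the McCoy--Ntantu lemma identifies this space with $(C_k(X))^{\omega}$, so $(C_k(X))^{\omega}$ is Baire. Applying the Fleissner--Kunen lemma with the single space $C_k(X)$ playing the role of ``$X$'', I conclude that $(C_k(X))^{\lambda}$ is Baire for every $\lambda \geq \omega$. Translating back, for an arbitrary index set of size $\lambda$ the McCoy--Ntantu lemma identifies $C_k\!\left(\bigoplus_{\alpha < \lambda} X\right)$ with $(C_k(X))^{\lambda}$, which is Baire; and since $\bigoplus_{\alpha < \lambda} X$ is locally compact, Theorem 1 then yields that it has the MOP. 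Sums indexed by finite or countable sets are already covered directly by the hypothesis, so every sum of copies of $X$ has the MOP.

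I do not expect a genuine obstacle here: the entire content is carried by the two cited lemmas together with Gruenhage's equivalence, and the only fact requiring independent verification is the routine local compactness of arbitrary topological sums, which is precisely what licenses the two applications of Theorem 1. The single point worth watching is the bookkeeping of index sets --- invoking the hypothesis specifically for the countably infinite sum so as to seed the Fleissner--Kunen step, and noting the finite and countable cases of the conclusion separately, since these fall under the hypothesis rather than under the power-space argument.
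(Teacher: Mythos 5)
Your proposal is correct and is essentially the paper's own argument: the paper derives this theorem directly from the Fleissner--Kunen lemma and the McCoy--Ntantu homeomorphism, threaded through Gruenhage's equivalence between the MOP and Baireness of $C_k$, exactly as you do. Your additional bookkeeping (local compactness of arbitrary sums, and the finite/countable index cases falling under the hypothesis rather than the Fleissner--Kunen step) only makes explicit what the paper leaves implicit.
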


Surprisingly, there is a consistent example of locally compact spaces $X$ and $Y$, each having the MOP, but $X \oplus Y$ does not have the MOP \cite{Ma}.

We do have one necessity theorem for large cardinals, but do not know whether the hypothesis is vacuous:

\begin{thm}
Suppose that whenever all usual powers of $C_k(X)$ are Baire, for locally compact, $\aleph_1$-collectionwise Hausdorff $X$, then $X$ is paracompact.  Then it is consistent that there is a strong cardinal.
\end{thm}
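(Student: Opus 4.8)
The plan is to prove this as a consistency-strength lower bound via inner model theory, arguing the contrapositive on the set-theoretic side. Assume the reflection hypothesis, call it $H$, that every locally compact, $\aleph_1$-collectionwise Hausdorff space all of whose $C_k$-powers are Baire is paracompact, and suppose toward a contradiction that there is no inner model with a strong cardinal. Under that assumption the core model $K$ exists (the hypothesis sits comfortably below a Woodin, so Steel's $K$ is available), weak covering holds, and by the Schimmerling--Zeman analysis of square in fine-structural extender models $\square_\kappa$ holds in $K$ for every singular $\kappa$ (there are no subcompacts in $K$, as those lie far above a strong). Weak covering, $(\kappa^+)^K = \kappa^+$ for singular $\kappa$, then transfers each such sequence upward, so $\square_\kappa$ holds in $V$ for every singular cardinal $\kappa$. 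I would then fix a singular $\kappa$ of cofinality $\omega$ with $\kappa > 2^{\aleph_0}$ and extract from a $\square_\kappa$-sequence a non-reflecting stationary $E \subseteq \kappa^+ \cap \op{Cof}(\omega)$ whose initial segments $E \cap \gamma$ are non-stationary in $\gamma$.

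On $E$ I would build the ladder system space $X$ exactly as in example f), taking the ladders from the coherent sequence. As there, $X$ is locally compact, locally countable, has countable closures of countable sets, and is $\aleph_1$-collectionwise Hausdorff but not paracompact, since $E$ is stationary and hence cannot be separated, so $X$ fails to be $\kappa^+$-collectionwise Hausdorff. The crucial point is the behaviour of small closed subspaces: any closed $Y \subseteq X$ with $\abs{Y} = 2^{\aleph_0} < \op{cf}(\kappa^+) = \kappa^+$ has its top points bounded below $\kappa^+$, so it sits inside an initial segment determined by some $\gamma < \kappa^+$; since $E \cap \gamma$ is non-stationary, that subspace is paracompact and so, by Lemma \ref{lem6}, has the MOP. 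The same applies to $\bigoplus_\omega X$, whose size-$2^{\aleph_0}$ closed subspaces are countable sums of such paracompact pieces. Theorem \ref{thm13new} then yields that both $X$ and $\bigoplus_\omega X$ have the MOP, the theorem that countable sums with the MOP force all sums to have it upgrades this to every $\bigoplus_\lambda X$, and the homeomorphism $C_k\!\left(\bigoplus_\lambda X\right) \cong \left(C_k(X)\right)^\lambda$ together with Gruenhage's characterization gives that every usual power of $C_k(X)$ is Baire.

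This produces a locally compact, $\aleph_1$-collectionwise Hausdorff, non-paracompact $X$ with all powers of $C_k(X)$ Baire, contradicting $H$; hence there is an inner model with a strong cardinal, and the desired consistency follows. I expect the main obstacle to be the core-model input rather than the topology: one must calibrate the anti-large-cardinal hypothesis so that $K$ both exists and carries $\square_\kappa$ at singular $\kappa$, and then push the square sequence from $K$ into $V$ via weak covering at successors of singulars while checking that the resulting $E$ genuinely has the non-reflecting, non-stationary-initial-segment structure example f) requires. A pleasant feature is that placing $\kappa$ above $2^{\aleph_0}$ lets the argument sidestep CH entirely, since Theorem \ref{thm13new} only ever inspects closed subspaces confined to non-reflecting initial segments, where paracompactness is automatic.
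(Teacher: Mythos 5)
Your proposal is correct in outline, and its topological half is exactly the paper's proof: the same ladder system space on a non-reflecting stationary set $E$ of $\omega$-cofinal ordinals in $\kappa^+$ with $\kappa \geq \mathfrak{c}$, the same verification that sums of copies have the MOP by noting that closed subspaces of size $\leq 2^{\aleph_0}$ are bounded (as $2^{\aleph_0} < \op{cf}(\kappa^+)$), hence live in paracompact initial segments, hence by Lemma \ref{lem6} have the MOP, and then Theorem \ref{thm13new}, the McCoy--Ntantu homeomorphism, and Gruenhage's characterization. (Two small remarks: you cite Theorem \ref{thm13new} where the paper's proof says Corollary \ref{corCH1}, which literally assumes CH --- your citation is the correct one; and your detour through $\bigoplus_\omega X$ and the countable-sums theorem is harmless but unnecessary, since Theorem \ref{thm13new} applies to an arbitrary sum directly, which is what the paper does.) The genuine divergence is on the set-theoretic side. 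The paper stops once its hypothesis is seen to rule out every such set $E$, and then quotes Jensen's theorem that the consistency of ``there is no non-reflecting stationary set of $\omega$-cofinal ordinals in $\lambda^+$, $\lambda \geq \mathfrak{c}$'' yields the consistency of a strong cardinal. You instead open that black box: from ``no inner model with a strong cardinal'' you build a core model $K$, use weak covering and Schimmerling--Zeman square in $K$ to transfer $\square_\kappa$ to $V$ at $V$-singular $\kappa$, and extract the non-reflecting stationary set by the standard pigeonhole argument on order types. That is essentially the content of the result the paper cites, so the two proofs share a skeleton; yours is self-contained modulo core model theory, while the paper's is shorter but citation-dependent.

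The one step that would not survive scrutiny as written is the parenthetical ``the hypothesis sits comfortably below a Woodin, so Steel's $K$ is available.'' Steel's core model is constructed under ``there is no inner model with a Woodin cardinal'' (originally together with a measurable cardinal, later removed by Jensen and Steel), and your assumption ``no inner model with a strong cardinal'' does not obviously imply that: a Woodin cardinal $\delta$ in an inner model $M$ only supplies cardinals that are strong up to $\delta$ in $M$, not an inner model with a fully strong cardinal, so the implication between the two anti-large-cardinal hypotheses itself requires an argument. Two clean repairs: (i) replace Steel's $K$ by Jensen's core model below a strong cardinal (the core model for non-overlapping extenders, as developed in Zeman's book), which exists, satisfies weak covering at singulars, and satisfies square precisely under your hypothesis; or (ii) case-split --- if some inner model $M$ has a Woodin $\delta$, then $V_\delta^M$ is already a transitive set model of $\mathrm{ZFC}$ plus ``there is a strong cardinal,'' so the theorem's conclusion holds outright, and otherwise Steel's $K$ is available. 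With either patch your argument is complete, granting the routine final observation that an inner model with a strong cardinal yields $\op{Con}(\mathrm{ZFC}+\text{there is a strong cardinal})$ by reflection and compactness.
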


\begin{proof}
Take a non-reflecting stationary set $E$ of $\omega$-cofinal ordinals in $\lambda^+$, for some $\lambda \geq \mf{c}$.  It is known (attributed to R. Jensen) that if it's consistent no such set exists, then it's consistent there is a strong cardinal - see \cite{J}.  Form a ladder system space $X$ on $E$.  $X$ is not paracompact, but initial segments of it are.  Consider an arbitrary sum $\bigoplus_{\alpha \in S}X_\alpha$ of copies of $X$.   I claim that $\bigoplus_{\alpha \in S}X_\alpha$ has the MOP, whence $[C_k(X)]^{\abs{S}}$ is Baire.  By Corollary \ref{corCH1}, it suffices to show closed subspaces of $\bigoplus_{\alpha \in S}X_\alpha$ of size $\leq 2^{\aleph_0}$ have the MOP.  But they are all paracompact, so they do.  But $X$ is not paracompact.
\end{proof}

A strong cardinal (see \cite{KW} for the definition) has arbitrarily large measurable cardinals below it. Thus, in $V_\kappa$, where $\kappa$ is the least strong cardinal, there is a proper class of measurable cardinals, but no strong cardinal in an inner model. Collapsing these cardinals as in \cite{GS} yields a model in which there is a ladder system space $X$ on a non-reflecting stationary set as above. Some box power of $C_k(X)$ is then not Baire.

Large cardinals can be used to destroy non-reflecting stationary sets; this translates into results about small subspaces being paracompact implying the whole space is paracompact. For example:

\begin{thm}[{~\cite{T2}}]
Martin's Maximum implies that if a first countable space is either generalized ordered or monotonically normal and closed subspaces of size $\aleph_1$ are paracompact, then the space is paracompact.
\end{thm}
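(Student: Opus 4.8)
The plan is to argue by contraposition, combining the Balogh--Rudin characterization of paracompactness for monotonically normal spaces with the stationary reflection supplied by Martin's Maximum. First I would handle the two alternatives uniformly: every generalized ordered space is monotonically normal, and monotone normality is hereditary, so it suffices to treat a first countable monotonically normal $X$ all of whose closed subspaces of size $\aleph_1$ are paracompact, and to deduce that $X$ itself is paracompact.

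Suppose $X$ were not paracompact. By the Balogh--Rudin theorem, a monotonically normal space fails paracompactness precisely when it contains a closed subspace $S$ homeomorphic to a stationary subset of some regular uncountable cardinal $\kappa$. I would first note that $\kappa \geq \aleph_2$: a stationary subset of $\omega_1$ is a non-paracompact space of size $\aleph_1$, which the hypothesis forbids as a closed subspace. Next I would exploit first countability. Since $S$ is first countable, at every point of $S$ that is a limit of $S$ from below the cofinality must be countable; hence all accumulation points of $S$ are $\omega$-cofinal, and (passing to its set of accumulation points if necessary) I may assume $S$ is a stationary subset of $E^{\kappa}_{\omega} = \set{\alpha < \kappa : \op{cf}(\alpha) = \omega}$.

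Now I would invoke Martin's Maximum, which implies the reflection principle for stationary sets concentrated on $\omega$-cofinal ordinals: for the regular cardinal $\kappa \geq \aleph_2$ there is $\delta < \kappa$ with $\op{cf}(\delta) = \omega_1$ such that $S \cap \delta$ is stationary in $\delta$. Because $\op{cf}(\delta) = \omega_1$ while $S \subseteq E^{\kappa}_{\omega}$, the ordinal $\delta$ does not lie in $S$ and no point of $S$ above $\delta$ is a limit of $S \cap \delta$; hence $S \cap \delta$ is closed in $S$, so its image is closed in $X$. Fixing a continuous increasing cofinal map $f : \omega_1 \to \delta$ with club range $C$, stationarity of $S \cap \delta$ makes $T = \set{\xi < \omega_1 : f(\xi) \in S}$ stationary in $\omega_1$, and $f \upharpoonright T$ is an order isomorphism, hence a homeomorphism, of $T$ onto the closed subspace $C \cap S$ of $S$. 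This exhibits a closed subspace of $X$ of size $\aleph_1$ homeomorphic to the stationary set $T \subseteq \omega_1$, which is not paracompact --- contradicting the hypothesis and completing the proof.

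The hard part will be the combinatorial reflection step: one must have exactly the form of stationary reflection (reflection to a point of cofinality $\omega_1$, for stationary subsets of $E^{\kappa}_{\omega}$ at every regular $\kappa \geq \aleph_2$) that MM delivers, and then check carefully that $S \cap \delta$ is genuinely closed in the copy and that restricting along a continuous cofinal map yields a closed copy of a stationary subset of $\omega_1$ of size exactly $\aleph_1$. Less delicate, but still needing verification, is the first-countability reduction forcing the Balogh--Rudin obstruction onto $\omega$-cofinal ordinals; its role is precisely to keep the obstruction off points of uncountable cofinality, where MM's reflection would not directly apply.
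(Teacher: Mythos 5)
The paper itself gives no proof of this theorem --- it is quoted verbatim from the cited reference \cite{T2} --- so there is no in-paper argument to compare against. Your proof is correct and is essentially the standard argument behind the cited result: reduce both cases to monotone normality, apply the Balogh--Rudin characterization (a monotonically normal space is paracompact iff it has no closed copy of a stationary subset of a regular uncountable cardinal), use first countability to push the stationary obstruction onto $\omega$-cofinal ordinals, and then use the Foreman--Magidor--Shelah consequence of MM that such stationary sets reflect to a point of cofinality $\omega_1$, yielding a closed non-paracompact subspace of size $\aleph_1$ and hence a contradiction.
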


For more results of this sort, see \cite{T2}.

In \cite{FleissnerBox}, Fleissner raises the question of whether, if the box product of a
collection of Baire spaces is Baire, its Tychonoff product is Baire.  Also see \cite{FK}.    The converse is not true \cite{FK}.
Note that for box powers, in the model of Galvin and Scheepers, this is true, since
Tychonoff products of weakly $\alpha$-favorable spaces are Baire \cite{W}.
Fleissner also asks whether the box product of Baire spaces with a countable base is
Baire \cite{FleissnerBox}. In this model, this is not true - consider the box powers of a
Bernstein set.

One might be tempted, in view of the countable nature of the Baire Category
Theorem and of weak $\alpha$-favorability, to conjecture that the Baireness of
countable box powers
would consistently be sufficient to imply weak $\alpha$-favorability, at least for
spaces with a countable base. This is not true. L. Zsilinszky \cite{Z} proved:

\begin{thm}
The countable box power of a Baire space with a countable base is Baire.
\end{thm}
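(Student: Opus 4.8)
The plan is to use the Banach--Mazur game characterization of Baireness rather than to argue directly with dense open sets. By Oxtoby's theorem \cite{Ox}, a space is Baire if and only if Empty has no winning strategy in the Banach--Mazur game played on it. Writing $Y$ for the countable box power $X^\omega$ (with the box topology), I would fix an arbitrary strategy $\sigma$ for Empty in the game on $Y$ and produce a single play consistent with $\sigma$ in which the intersection of the chosen open sets is nonempty; since $\sigma$ was arbitrary, this shows Empty has no winning strategy on $Y$, so $Y$ is Baire. The reason to expect that mere Baireness of $X$ suffices --- rather than, say, deriving weak $\alpha$-favorability of $Y$ --- is the Bernstein-set phenomenon noted above: a space with a countable base can be Baire without being weakly $\alpha$-favorable, so the only leverage $X$ offers is exactly the asymmetric fact that Empty has no winning strategy on $X$, and the whole argument must be built from that.

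First I would exploit the countable base. Fix a countable base $\set{B_m : m < \omega}$ for $X$; then every basic open box in $Y$ has the form $\prod_n B_{f(n)}$ for some $f \in \omega^\omega$, and both players may be assumed to move by such boxes. A move of $\sigma$ then decomposes coordinatewise: it refines the current box in every coordinate at once, taking values in the base. The picture to keep in mind is that a single play on $Y$ projects, in each coordinate $n$, to a play of the Banach--Mazur game on $X$, and the intersection of the $Y$-boxes is nonempty precisely when every one of these coordinate plays has nonempty intersection. Thus the task reduces to running $\omega$ coordinate games against $\sigma$ and keeping all of them ``alive.''

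The construction I would carry out is a simultaneous, dovetailed defeat of $\sigma$ across all coordinates. Using a bijection $\omega \cong \omega \times \omega$, I would build Nonempty's replies so that in each coordinate $n$ the induced play on $X$ is arranged to have nonempty intersection. This is where Baireness of $X$ enters: in each coordinate the forced moves of $\sigma$ together with Nonempty's replies form a legal Banach--Mazur play on $X$, and because Empty has no winning strategy on $X$, Nonempty can always keep that coordinate alive. Since $X$ has only countably many basic open sets, the tree of relevant partial positions in each coordinate is countable, which lets me perform a fusion: Nonempty's moves are chosen by finite commitments that are revisited cofinally often, so that all $\omega$ coordinate intersections are secured in the limit.

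The main obstacle is precisely that the coordinate games are \emph{not} independent: the move $\sigma$ dictates in coordinate $n$ at a given stage depends on Nonempty's earlier choices in all coordinates, so one cannot play $\omega$ separate copies of the $X$-game and paste the outcomes. Overcoming this coupling is the technical heart of the matter. The device I would use is to process coordinates in the dovetailed order, at each step committing only finitely many coordinates while leaving the rest free, and invoking the non-winning-ness of the relevant coordinate projection of $\sigma$ only against the finitely many commitments made so far; second countability guarantees that each coordinate is attended to infinitely often. The delicate point --- the step I expect to require the most care --- is verifying that these per-coordinate applications of ``Empty has no winning strategy on $X$'' can be made compatible along a single branch, so that the fusion never drives one coordinate's intersection empty while rescuing another. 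Once the fusion is complete, the resulting play is consistent with $\sigma$, every coordinate intersection is nonempty, and hence so is the intersection of the $Y$-boxes; thus $\sigma$ is not winning, and $Y$ is Baire.
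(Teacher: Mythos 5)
Your set-up is sound: Oxtoby's game characterization of Baireness holds for arbitrary spaces, both players may be restricted to boxes of basic sets, and a decreasing sequence of boxes has nonempty intersection iff every coordinate-wise intersection is nonempty. You also correctly locate the difficulty in the coupling of coordinates through Empty's strategy $\sigma$. The gap is that the device you offer against that difficulty does not exist in the box topology. There is no such thing as ``committing only finitely many coordinates while leaving the rest free'': every legal move specifies a nonempty open set in \emph{every} coordinate, and $\sigma$'s reply may depend on the entire infinite tail of that move. Dovetailed finite commitments are precisely how one proves Oxtoby's theorem for the \emph{Tychonoff} power, where basic open sets have finite support; the box topology is exactly the setting where that technique is unavailable. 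Moreover, the hypothesis ``Empty has no winning strategy on $X$'' can only be invoked against a \emph{fixed} strategy on $X$, and what it returns is one complete infinite play defeating that strategy. A ``coordinate projection of $\sigma$'' is not a fixed strategy: it is undefined until the whole infinite play in all other coordinates is specified, and your fusion revisits and revises those very plays cofinally often, invalidating whatever defeating play an earlier invocation produced. So the step you defer as ``the delicate point'' is not a verification to be supplied later; it is the entire content of the theorem, and nothing in the sketch addresses it.

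A concrete way to see that the scheme cannot be repaired as stated: specialize it to two factors. For a product of two Baire spaces your plan reads essentially verbatim (the dovetailing becomes trivial), and your only uses of the countable base --- counting per-coordinate positions and ensuring every coordinate is attended to infinitely often --- become vacuous. Yet by Fleissner and Kunen \cite{FK} there exist in ZFC two metrizable (non-second-countable) Baire spaces whose product is not Baire, so the coupling problem is provably insurmountable by game-juggling alone; any correct proof must make the countable base do analytic rather than combinatorial work. Note also that the paper contains no proof of this statement --- it is quoted from Zsilinszky \cite{Z} --- so there is nothing internal to compare against; Zsilinszky's actual argument runs through the Kuratowski--Ulam theorem (the category analogue of Fubini). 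A space with a countable $\pi$-base is \emph{universally Kuratowski--Ulam}: for every space $Z$ and every meager $E \subseteq Z \times X$, the set of $z \in Z$ whose section $E_z$ is non-meager in $X$ is meager in $Z$. Applying this with $Z$ the tail box power (an arbitrary space, which is the point of ``universally'') is what lets one peel off coordinates and control exactly the coupling your coordinate-wise game argument cannot. If you want to salvage the write-up, replace the per-coordinate defeats of $\sigma$ by a section-wise Kuratowski--Ulam analysis of the meager set witnessing a putative failure of Baireness of the box power.
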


But again, a Bernstein set is not weakly $\alpha$-favorable.

\section{Problems}
The most interesting open question in this area is raised in \cite{Gruenhage}:

\begin{prob}
Is there in ZFC a locally compact, normal space with the MOP which is not paracompact (equivalently, $C_k(X)$ is not weakly $\alpha$-favorable)?
\end{prob}

None of the examples we have mentioned exist in the model of PFA$(S)[S]$ we have been using, since in that model there are no Souslin trees, and normal first countable spaces are collectionwise Hausdorff \cite{LT1}.

\begin{prob}[{\cite{GS}}]
Are large cardinals necessary for Theorem \ref{thmGS}?
\end{prob}

\begin{prob}
Can one prove in ZFC that some box power of $C_k$ of a ladder system space on a (non-reflecting?) stationary set of $\omega$-cofinal ordinals is not Baire?
\end{prob}

\begin{prob}[{\cite{FleissnerBox}}]
Can one prove in ZFC that if a box product of a collection of Baire spaces is Baire, then its Tychonoff product is Baire?
\end{prob}
\nocite{*}
\bibliographystyle{acm}
\bibliography{MovingOffNew}



\end{document}